\documentclass[A4,12pt]{svjour3}
\usepackage{amssymb,amsmath,latexsym,color,
amsfonts}
\usepackage{hyperref}
\usepackage{mathrsfs}
\newcommand{\mh}{\mathcal{H}}
\newcommand{\mv}{\mathcal{V}}
\newcommand{\mw}{\mathcal{W}}

\newcommand{\real}{\mathbb R}

\newcommand{\Dt}{\Delta}
\newcommand{\norm}[1]{\left\Vert#1\right\Vert}
\newcommand{\abs}[1]{\left\vert#1\right\vert}
\newcommand{\rot}{\mathrm{rot}}
\newcommand{\nat}{I\!\! N}
\newcommand{\eps}{\varepsilon}

\newcommand{\dv}{\mathrm{div}}

\newcommand{\vect}{\mathrm{span}}

\newcommand{\ds}{\displaystyle}

\numberwithin{equation}{section} \setcounter{page}{1}

\DeclareMathOperator*{\esssup}{ess\,sup}
\DeclareMathOperator*{\essinf}{ess\,inf}
%\DeclareMathOperator*{\limsup}{lim\,inf }
%\DeclareMathOperator*{\liminf}{lim\,inf }
%\makeindex
%\makeglossaries

\usepackage{hyperref}

\newcommand{\la}{\langle}
\newcommand{\lf}{\left\langle\!\!\!\left\langle}
\newcommand{\rr}{\right\rangle\!\!\!\right\rangle}
\newcommand{\ra}{\rangle}

%---- Dimensions des marges ---
%\usepackage{vmargin}            %%%%%%%%%%%%%%%%%%%%%%%%%%%%%%%%%%%%% redéfinir les marges
%\usepackage{supertabular}

%\setmarginsrb{3 cm}{0.51cm}{2cm}{1cm}{1cm}{0.51cm}{0.51cm}{3cm}

\begin{document}
\title{Weak solutions and optimal control of hemivariational evolutionary Navier-Stokes equations under Rauch condition}
\titlerunning{Hemivariational Inequality for Navier-Stokes Equations}        % if too long for running head

\author{H. Mahdioui \and  S. Ben Aadi \and K. Akhlil  
}

%\authorrunning{Short form of author list} % if too long for running head

\institute{ H. Mahdioui, ENSA, Ibn Zohr University, h.mahdioui@uiz.ac.ma\\
S. Ben Aadi, Ibn Zohr University, sultana.benaadi@edu.uiz.ac.ma\\
K. Akhlil, FPO, Ibn Zohr University, akhlil.khalid@gmail.com.
}

\date{Received: date / Accepted: date}
% The correct dates will be entered by the editor
%\date{2018}
\maketitle

\begin{abstract}
In this paper, we consider evolutionary Navier-Stokes equations subject to the nonslip boundary condition together with a Clarke subdifferential relation between the dynamic pressure and the normal component of the velocity. Under Rauch condition, we use the Galerkin approximation method and a weak precompactness criterion to ensure the convergence to a desired solution. Moreover, a control problem associated with such system of equations is studied with the help of a stability result with respect to the external forces. In the end of this paper, a more general condition due to Z. Naniewicz, namely the directional growth condition, is considered and all the results are reexamined.

\end{abstract}

\subclass{35Q30 \and 47H10 \and 49J20 \and  49J52  \and 49J53}

\keywords{Hemivariational inequality, Navier-Stokes equations, nonsmooth optimization, Rauch condition, directional growth condition.}

\tableofcontents

\section{Introduction}

    In many engineering situations, one deals with fluids flow problems in tubes or channels, or for semipermeable walls and membranes. In practice, hydraulic control devices are used as a mechanism allowing the adjustment of orifices dimensions so that the normal velocity on the boundary of the tube is regulated to reduce the dynamic pressure. The model that describes usually this situation is repesented by Navier-Stokes equations for incompressible viscous fluids with the nonslip boundary conditions together with a Clarke subdifferential relation between the dynamic pressure and the normal component of the velocity. The resulting multivalued subdifferential boundary condition leads, after a standard variational transformation, to the so-called hemivariational inequality. 

The theory of hemivariational inequalities was introduced for the first time by Panagiotopoulos \cite{Pana81,Pana83,Pana89,Pana88,Panagiotopoulos1} for the sake of generalization of the classical convex variational theory to a nonconvex one. The main tool in this effort is the generalized gradient of Clarke and Rockafellar \cite{Clar75,Clar83,Rock80}. From this perspective, the litterature have seen a fast emergence of applications in a mathematical and mechanical point of view, see  \cite{Pana89,Pana88,Pana91,Pana882,Panasta88,Panakol90,Panaban84} for more details. Among the main applications of this theory, we mention Newtonian and non-Newtonian Navier-Stokes equations and their variants (Oseen model, heat-conducting fluids, miscible liquids,\dots) with nonstandard boundary conditions ensuring from multivalued nonmonotone friction law with leak, slip or nonslip conditions. For recent directions on the hemivariational theory we refer to \cite{LL,LT,XZS,migdud18}.

Over the last two decades intensive research has been conducted on hemivariational inequalities for stationary and non-stationary Navier-Stokes equations. For convex functionals, the problem has been studied essentially by Chebotarev \cite{chebo92,chebo97,chebo03}. We mention also \cite{chebo01} for stationary Boussinesq equations and \cite{kono00} by Konovalova for non-stationary Boussinesq equations. In all these papers the considered problems was formulated as variational inequalities. In the nonconvex case, the stationary case was considered by Migorski and Ochal \cite{M05HE} Migroski \cite{mig04}, for nonstationary case Migorski and Ochal \cite{M07HE}, see also \cite{fang16}. For an equilibrium approach one can see for example \cite{Benaadi}. On the other hand, optimal control problem involving hemivariational inequalities attracts more and more attention of researchers in recent years, we refer to the introductions of \cite{PGM20} and \cite{FH20} for a short review on the subject.

There are two main conditions that one can impose on the locally Lipschitz function under subdifferential effect, namely the classical growth condition or the Rauch condition due to J. Rauch \cite{rauch1977discontinuous}. The last one is less popular even if it was the main assumption in the beginning of the theory of hemivariational inequalities. The Rauch condition expresses actually the ultimate increase of the graph of a certain locally bounded function and is, in fact a special case of another unpopular condition, namely the directional growth condition due to Z. Naniewicz \cite{nani94}. As an advantage of the Rauch condition is that it allows to avoid smallness conditions(i.e. the relationship between the constants of the problem) brought by the classical growth condition. In the case of Navier-Stokes equations, the smallness condition links the growth condition constant, the coercivity constant and the norm of the trace operator. It is however not clear how it can be checked in a concrete situation. Another advantage, is that it allows us to consider "stanger" functions at infinity. In fact the only thing we require to the function is that the essential supremum of the function on the left side to be  greater than the essential infimum on the right side.

Among the disadvantages of Rauch condition is that although it ensures the existence of a solution, does not allow the conclusion that the non-convex functional is
locally Lipschitz or even finite on the whole space. The Aubin-Clarke formula can't be used and a slight change on the definition of a solution have to be made. On the other hand we are looking for the dynamical pressure in a larger space which makes harder the question of uniqueness without a classical growth condition even if a monotonicity type assumption is acquired\cite{MBA}. Finally it is worth to mention that there is no direct link between the Rauch condition and the classical growth condition and the choice depends mainly on the concrete situation.

The present paper represents a continuation of our previous paper \cite{MBA} where existence and optimal control questions involving stationary Navier-Stokes problem with multivalued nonmonotone boundary condition are studied. In this paper we tackle the non-stationary problem. Always under Rauch condition, we use the Faedo-Galerkin approximation to regularize the system at the level of the multivalued boundary condition and we use the fact that the approximation sequence so-obtained is weakly precompact in the space of integrable functions. We also take advantage of the techniques used in \cite{M07HE} at the level of the nonlinear term to ensure the convergence of the approximate sequence to the desired solution. This study can be also done with the directional growth condition as a generalization. The question of the existence of an optimal control is important in applications. We tackle this subject in the spirit of the works of Barbu \cite{Ba} and Migorski \cite{law2013note}.

The outline of this paper is as follows. In section 2 we state the problem and give its hemivariational form by using the Lamb formulation. In section 3 we regularize our problem by using the Faedo-Galerkin approximation method and prove the existence of solutions to the regularized problem. By combining techniques from \cite{M07HE} and \cite{MBA} we will provide an existence result in section 4. Section 5 is devoted to the optimal control problem subjected to our evolutionary hemivariational inequality, while section 6 is dedicated to the directional growth condition as a generalization of the Rauch condition.

\section{Problem statement}
\label{sec:2}
Let $\Omega$ be a bounded simply connected domain in $\real^d$ with $d=2,3 $ with connected boundary $\partial\Omega$ of class $C^2$ and $\Omega_T= (0,T) \times \Omega $ where $T>0$. We consider the following evolution Navier-Stokes system:
\begin{align}\label{e}
u' -\nu \Dt u + (u.\nabla u) + \nabla p  = f_{\text{ext}},  \text{ in } \Omega_T\\
 \dv u = 0 \,
 \text{ in } \Omega_T\\
u(0)=u_0 \text{ in } \Omega,
\end{align}
This system describes the flux of an incompressible viscous fluid in a domain $\Omega$ subjected to an external force $f_{\text{ext}} = \{f_{\text{ext},k}\}_{k=1}^d$.  $u = \{u_k\}_{k=1}^d$, $p$ and $\nu$ denote respectively the velocity, the pressure and the kinematic viscosity of the fluid. The nonlinear term $(u.\nabla)u$ (called the convective term) is the symbolic notation of the vector $\sum_{j=1}^d u_j \frac{\partial u_i}{\partial x_j}$. As usual we use the Lamb formulation \cite[Chapter I]{Gir86} to rewrite the evolution Navier-Stokes system as follows:
\begin{equation}\label{eq02}
u'+ \nu \rot\,\rot\, u + \rot\, u\times u + \nabla \widetilde p  = f_{\text{ext}}, \text{ in } \Omega_T
\end{equation}
\begin{equation}\label{eq3}
 \quad \dv u = 0 \,
  \mbox{ in } \Omega_T .
\end{equation}
\begin{equation}
u(0)=u_0 \, \mbox{ in } \Omega.
\end{equation}\label{eqq }
where $\widetilde p=p+\frac{1}{2}\abs{u}^2$ is the total head of the fluid, or "total pressure" .

We suppose that on the boundary $\partial\Omega$ the tangential component of the velocity vector are known, and
without loss of generality we put them equal to zero (the nonslip condition):
\begin{equation}
u_\tau(t,x):=u(t,x)-u_N(t,x)n \, \text{ on } \partial\Omega_T:=(0,T)\times \partial\Omega 
\end{equation}
where $n=\{n_k\}^d_{k=1}$ is the unit outward normal on the boundary $\partial\Omega$ and $u_N(t,x)=u(t,x).n=\sum_{1}^{d}u_i(t,x)n_i$ denotes the normal component of the vector $u$. Moreover, we assume the following subdifferential boundary condition:
\begin{equation}\label{eq4}
\widetilde p(t,x) \in \partial j(u_N(t,x)) \text{ on } \partial\Omega_T
\end{equation}where $\partial j (\xi)$ is the Clarke subdifferential of   $j$ at $\xi$ and is given by

\begin{center}
$\partial j(\xi)= \{ \xi^*\in V^* : j^0(\xi;h) \geq \la \xi^*,h\ra_{V^*\times V} \text{ for all } h\in V \}$,
\end{center}
and $j^0(\xi;h)$ is the generalized derivative of a locally Lipschitz function $j$ at $\xi\in V$ in the direction $h\in V$ defined by:
\begin{center}
$j^0(\xi;v)=\ds\limsup_{\nu \rightarrow \xi, \lambda \downarrow
0} \frac{j(\nu+\lambda v)-j(\nu)}{\lambda }.$
\end{center}

To work conveniently on the problem \eqref{eq02}-\eqref{eq4}, we need the following functional spaces:
\begin{align*}
C=& \{ u \in \mathcal{C}^\infty(\Omega;\real^d) : \dv u=0 \text{ in } \Omega, u_\tau=0 \text{ on } \partial\Omega \}\\
V=&\text{ the closure of } C \text{ in the norm of } H^1(\Omega;\real^d)\\
H=&\text{ the closure of } C \text{ in the norm of } L^2(\Omega;\real^d)
\end{align*}
Then we have $ V \subset H \simeq H^* \subset V^*$, with all the embedding being continuous and compact. Moreover, 
for an interval time $[0,T]$, we introduce the following spaces:
\begin{align*}
\mv=& L^2(0,T; V)\\
\mh=& L^2(0,T; H)\\
 \mw=&\{ u\in \mv: u'\in \mv^* \}
\end{align*} 
Then, we have also the following continuous embedding, $ \mw \subset \mv \subset \mh \subset \mv^*.$

We consider the operators $\mathscr A:V\rightarrow V^*$ and $\mathscr B:V\times V\rightarrow V^*$ defined by:
\begin{equation}
\la \mathscr A u, v \ra= \nu \int_\Omega \rot\, u. \rot\, v \,{\rm d}x
\end{equation}
\begin{equation}
\la \mathscr B(u,v), w\ra=\int_\Omega (\rot\, u \times v). w \,{\rm d}x
\end{equation}
for all $u,v,w \in V$. As usual, we will use the notation $\mathscr B[.]=\mathscr B(.,.)$. It is well known (cf.\cite{{Byk60}}) that if the domain $\Omega$ is simply
connected, the bilinear form 
\begin{equation}
(\!(u,v)\!)_V = \int_\Omega \rot\, u. \rot\, v\, {\rm d}x
\end{equation}
 generates a norm in $V$, $\norm{u}_V= (\!(u,v)\!)_V^\frac{1}{2}$ which is equivalent to the $H^1(\Omega, \real^d)$-norm. Hence, it is clear 
that the operator $\mathscr A$ is coercive.

In order to give the weak formulation to the problem \eqref{eq02}-\eqref{eq4}, we multiply it by a certain $v \in V$ and apply the Green formula. We obtain:
\begin{equation}\label{eq6}
\la u'(t)+\mathscr Au(t)+\mathscr B[u(t)],v\ra+\ds\int_{\partial\Omega }\widetilde{p}(t,x)v_N {\rm d}\sigma(x)=\la f(t),v\ra,
\end{equation}where $\la f(t),v\ra=\int_\Omega f_{\text{ext}}(t).v\,dx$.  From the relation (\ref{eq4}), by using the definition of the Clarke subdifferential, we have
\begin{equation}\label{eq7}
\ds\int_{\partial\Omega} \widetilde{p}(t,x) v_N(x){\rm d}\sigma(x) \leq \int_{\partial\Omega} j^0(x,u_N(x);v_N(x)){\rm d}\sigma(x).
\end{equation}
The relations (\ref{eq6})-(\ref{eq7}) yields to the following weak formulation 
\begin{equation*}
\text{(EHVI)} \left\{ 
\begin{array}{ll}
\text{ Find } u\in \mathcal W \text{ such that for all } v\in V \text{ and a.e. }t \in (0,T)\\ \la u'(t)+ \mathscr Au(t)+\mathscr B[u(t)],v\ra+\ds\int_{\partial\Omega} j^0(t,x,u_N(t,x);v_N(x)){\rm d}\sigma(x) \geq \la f(t),v\ra,\\
u(0)=u_0.

\end{array}
\right.
\end{equation*}
The equation above is called an hemivariational inequality.

We have already mentioned in the introduction that the Rauch assumption is not sufficient to make the functional $J(u)=\int_{\partial\Omega} j(u)\,d\sigma$ locally lipschitz or even finite in the whole space $\mathcal V$. Because of this reason, a slight modified definition of being a solution should be adopted. Define the functional space
\[
L_N^\infty(\partial\Omega)=\{u,\,u_N=\gamma(u).n\in L^\infty(\partial\Omega;\mathbb R)\}\]
where $\gamma$ is the trace operator from $ V$ in $L^2(\partial\Omega;\mathbb R^d)$. Now, we are able to give what we mean by a solution to the problem $(EHVI)$.

\begin{definition}

A function $u\in \mathcal W$ is said to be solution of (EHVI) if there exists $\kappa\in  L^1((0,T)\times \partial \Omega,\,\real)$ such that: For a.e $ t\in (0,T)$
\begin{equation*}\label{eqdef}
\left\{ 
\begin{array}{ll}
\la u'(t)+\mathscr Au(t)+\mathscr  B[u(t)],v\ra+\ds\int_{\partial\Omega} \kappa(t,z)\,v_N(z)\, d\sigma(z) = \la f(t),v\ra, \forall v\in\ V\cap L_N^\infty(\partial\Omega)\\
\kappa (t,z)\in\partial j(t,z;u_N(t,z)),\,\text{for a.e.}\, (t,z) \in(0,T)\times\partial\Omega\\
u(0)=u_0.

\end{array}
\right.
\end{equation*}

\end{definition}
Note that since $ \mathcal W \subset C(0,T;H)$ continuously the initial condition $u(0)= u_0$  makes sense in $H$. To justify the above definition we refer to \cite{MBA} and \cite{nani94}.

\section{Regularized Problem}

 In what follows we restrict our study to superpotentials $j$ which are independent of $x$ and which subdifferential is obtained by "filling in the gaps" procedure (cf.\cite{rauch1977discontinuous}). Let $\theta \in L^\infty_{loc}(\real)$, for $\eps>0$ and $t\in \real$, we define:
\begin{equation*}
\underline{\theta}_\eps(t)=\displaystyle\essinf_{\abs{t-s}\leq\eps}\theta(s),\quad  \quad \overline{\theta}_\eps(t)=\displaystyle\esssup_{\abs{t-s}\leq\eps}\theta(s).
\end{equation*}
For a fixed $t\in \real$, the functions $\underline{\theta}_\eps,  \overline{\theta}_\eps$ are decreasing and increasing in $\eps$, respectively. Let 
\begin{align*}
\underline{\theta}(t)=\lim_{\eps \to 0^+} \underline{\theta}_\eps(t), \quad\quad \overline{\theta}(t)=\lim_{\eps \to 0^+}\overline{\theta}_\eps(t),
\end{align*}
and let $\hat{\theta}(t):\real \to 2^\real$ be a multifunction defined by
\begin{equation*}
\widehat{\theta}(t)=\left[ \underline{\theta}(t),\overline{\theta}(t) \right]
\end{equation*}
From Chang \cite{chang80} we know that a locally Lipschitz
function $j:\real \to \real$ can be determined up to an additive constant by the relation $$j(t)=\int_0^t \theta(s){\rm d} s $$ such that $\partial j(t) \subset \hat{\theta}(t)$ for all $t\in\real$. If moreover, the limits $\theta(t\pm 0)$ exist for every $t\in\real$, then $\partial j(t) = \hat{\theta}(t)$.\\
In order to define the regularized problem, we consider the mollifier \[\mathfrak h\in C_0^\infty(-1,1), \mathfrak h\geq 0 \text{ with } \ds\int_{-\infty}^{+\infty} \mathfrak h(s)\, ds=1
\] and let
\[\theta_\epsilon =\mathfrak h_\epsilon \star \theta
\text{ with } \mathfrak h_\epsilon(s)=\frac{1}{\epsilon}\mathfrak h(\frac{s}{\epsilon})
\]where $\star$ denotes the  convolution product. 

Consider the following auxiliary problem associated to (EHVI):
\begin{equation*}
(\mathscr P_\eps) \left\{ 
\begin{array}{ll}
\text{ Find } u \in  \mathcal W \text{ such that for all } v\in V\cap L_N^\infty(\partial\Omega), \text{ a.e. } t\in (0,T)\\
\ds \la u'(t)+ \mathscr Au(t)+ \mathscr B[u(t)],v\ra+\int_{\partial\Omega} \theta_\eps(u_N(t,x))v_N~ {\rm d}\sigma =\la f(t),v\ra \\
u(0)=u_0. \end{array}
\right.
\end{equation*}

Now and in order to define the corresponding finite dimensional problem, we  shall  use  the Faedo-Galerkin approximation approach. Let us consider a Galerkin basis $\{z_1,z_2,\dots\}$ in $V\cap L_N^\infty(\partial\Omega) $, i.e. $\{z_1,z_2,\dots\}$  forms at most a countable sequence of elements of $V\cap L_N^\infty(\partial\Omega)$, finitely $\{z_1,z_2,...,z_m\}$  are linearly independent. Consider $V_m=\vect\{z_1,z_2,\dots,z_m)$, we have $V_m \subset V_{m+1}$ and $ \ds\overline{\cup_{m \geq 0} V_m}=V\cap L_N^\infty(\partial\Omega)$. Moreover, the family $\{V_m\}_m$ satisfies
\[ \forall v \in V\cap L_N^\infty(\partial\Omega), \,\exists (v_m)_m,\, v_m \in V_m \text{ such that } v_m \to  v \text{ in } V\cap L_N^\infty(\partial\Omega), \text{ as  } n \to +\infty.\]
Let $\{u_m(0)\}$  be an approximation of the given  initial value $u_0$ such that $u_m(0) \in V_m$ for $ m\in \nat$ and suppose  that 
\begin{equation*}
u_{m0} \to u_0 \text{ in } H, \text{as } m \to +\infty.
\end{equation*} and,  

\begin{equation*}
(u_{m0})_m \text{ is bounded in } V\cap L_N^\infty(\partial\Omega).
\end{equation*}
We consider the following regularized Galerkin system of finite dimensional differential equations associated to (EHVI):
\begin{equation*}
(\mathscr P^m_\eps) \left\{ 
\begin{array}{ll}
\text{ Find } u_m \in \mathcal W_m \text{ such that for all } v\in V_m, \text{a.e. } t\in (0,T)\\
\ds \la u'_m(t)+\mathscr Au_m(t)+ \mathscr B[u_m(t)],v\ra+\int_{\partial{\Omega}} \theta_{\eps_m}(u_{mN}(t))v~ {\rm d}\sigma =\la f(t),v\ra \\
u_m(0)=u_{m0}.
\end{array}
\right.
\end{equation*}where $\mw_m=\{u\in L^2(0,T; V_m):\, u'\in L^2(0,T; V_m)\}$. 

The generalized derivative $\mathscr Lu = u'$ restricted to the subset $D(\mathscr L)= \{ u\in \mv: u'\in \mv^* \text{ and  } u(0)= u_0 \} = \{ u\in \mw : u(0)= u_0 \}$ defines a linear operator $\mathscr L: D(\mathscr L) \subset \mv \to \mv* $
given by
\[ \lf \mathscr Lu,v\rr= \int_0^T \la u'(t),v(t)\ra {\rm d}t \text{ for all } v\in \mv .\]

For the existence of solutions we will need the following hypothesis $H(\theta)\quad$ :

\begin{itemize}
\item[(1) ](Chang assumption) $\theta \in L_{loc}^{\infty}(\real),\, \theta(t \pm 0)\text{ exists for any } t \in \real.$
\item[(2) ] (Rauch assumption) there is  $\delta_{0}>0$ such that:
\[
\displaystyle\esssup_{ ]-\infty,-\delta_{0}[ } \theta(t) \leq 0 \leq \displaystyle\essinf _{ ]\delta_{0},+\infty[ } \theta(t)
\]
\end{itemize}

\begin{remark}
If one assume more generally that

\begin{equation}\label{eqdefalpha}
\displaystyle\esssup_{ ]-\infty,-\delta_{0}[ } \theta(t) \leq \alpha \leq \displaystyle\essinf _{ ]\delta_{0},+\infty[ } \theta(t)
\end{equation}
for some real number $\alpha$, it is possible to come back to the situation where the Rauch assumption is imposed by simply replacing $\theta$ by $\theta -\alpha$ and $f$ by $f-\alpha$. 
\end{remark}
\begin{remark}
We point out that the Rauch and the growth conditions are completely independent. Indeed, by taking examples, we show that neither of both conditions implies the other.  In fact, consider the function $\beta: \real \to \real$ defined by:
\begin{equation*}
\beta(t)=  \left \{  
\begin{array}{ll}
 \left\lfloor t^3\right\rfloor &\text{ if } \abs{t} \geq 1, \\
 -t &\text{ if } \abs{t} < 1.
  \end{array}
\right.
\end{equation*}
where $\left\lfloor t\right\rfloor$ stands for the integer part of $t$. One can prove easily (eventually by a contradiction argument), that  the function $\beta$ satisifies the Rauch condition while the growth condition can not be satisfied. Conversely, one can take a function $\beta: \real \to \real$ defined by $\beta(t)= 1+\sqrt{\abs{t}}$, it's clear that it satisfies the growth condition but not the Rauch condition as $\beta$ is positive for negative values.
\end{remark}

\begin{lemma}\label{lem1}
Suppose that $H(\theta)$ holds. Then we can determine $\rho_1,\rho_2 > 0$ such that for every $u\in V_m$
\begin{equation}\label{eq14}
\int_{(0,T)\times \partial{\Omega}} \theta_{\eps}(u_N(t,z))u_N(t,z) {\rm d}\sigma(z) {\rm d}t \geq -\rho_1\rho_2 T. \sigma(\partial{\Omega}).
\end{equation}
\end{lemma}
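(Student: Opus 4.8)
The plan is to reduce the integral inequality to a \emph{pointwise} lower bound on the scalar product $\theta_\eps(s)\,s$, valid for every $s\in\real$, and then to integrate that bound over $(0,T)\times\partial\Omega$ with $s=u_N(t,z)$. Indeed, once one shows $\theta_\eps(s)\,s\geq -\rho_1\rho_2$ for all $s$, substituting $s=u_N(t,z)$ and integrating gives
\[
\int_{(0,T)\times\partial\Omega}\theta_\eps(u_N(t,z))\,u_N(t,z)\,{\rm d}\sigma(z)\,{\rm d}t\ \geq\ -\rho_1\rho_2\int_{(0,T)\times\partial\Omega}{\rm d}\sigma(z)\,{\rm d}t\ =\ -\rho_1\rho_2\,T\,\sigma(\partial\Omega),
\]
which is exactly \eqref{eq14}, and crucially the constants do not depend on $u$. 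So the whole lemma rests on the scalar estimate.

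To obtain the pointwise bound I would exploit that the mollifier $\mathfrak h_\eps$ is supported in $(-\eps,\eps)$, writing $\theta_\eps(s)=\int_{-\eps}^{\eps}\mathfrak h_\eps(r)\,\theta(s-r)\,{\rm d}r$, and split $\real$ according to the gap $\delta_0$ of the Rauch assumption. For $s>\delta_0+\eps$ every argument $s-r$ with $r\in(-\eps,\eps)$ satisfies $s-r>\delta_0$, so $\theta(s-r)\geq\essinf_{]\delta_0,+\infty[}\theta\geq 0$; since $\mathfrak h_\eps\geq 0$ has unit mass, $\theta_\eps(s)\geq0$ and hence $\theta_\eps(s)\,s\geq0$. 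The symmetric computation for $s<-\delta_0-\eps$, using $\esssup_{]-\infty,-\delta_0[}\theta\leq0$, gives $\theta_\eps(s)\leq0$ and again $\theta_\eps(s)\,s\geq0$. Thus the product is already nonnegative once $\abs{s}>\delta_0+\eps$.

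It remains to control the bounded middle region $\abs{s}\leq\delta_0+\eps$. There the arguments $s-r$ range in the fixed compact interval $[-\delta_0-2\eps,\delta_0+2\eps]$, on which $\theta$ is essentially bounded because $\theta\in L^\infty_{loc}(\real)$; restricting to $\eps\leq 1$ I would set $\rho_1$ equal to an upper bound for $\norm{\theta}_{L^\infty(-\delta_0-2,\delta_0+2)}$, so that $\abs{\theta_\eps(s)}\leq\rho_1$, and $\rho_2=\delta_0+1\geq\abs{s}$. This yields $\theta_\eps(s)\,s\geq-\rho_1\rho_2$ on the middle region, and combined with the previous case it gives $\theta_\eps(s)\,s\geq-\rho_1\rho_2$ for all $s\in\real$, completing the estimate.

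The step I expect to be delicate is precisely the interaction between the convolution and the Rauch condition: one must make sure that mollification does not destroy the sign information carried by the $\esssup$/$\essinf$ inequalities. This is guaranteed by the compact support of $\mathfrak h_\eps$ together with the gap $\delta_0$, which ensure that for $\abs{s}>\delta_0+\eps$ the convolution only averages values of $\theta$ lying strictly beyond $\pm\delta_0$. A secondary point, needed for the Galerkin passage to the limit later, is that $\rho_1,\rho_2$ be independent of both $u$ and $\eps$; this is why I fix the threshold $\eps\leq 1$ once and for all, so that the compact interval defining $\rho_1$ and the value $\rho_2=\delta_0+1$ can be chosen uniformly.
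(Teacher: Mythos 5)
Your proof is correct and follows essentially the same route as the paper, which simply invokes the pointwise/stationary estimate from \cite[Lemma 3.2]{MBA} and integrates it over $t\in(0,T)$; your argument is precisely that classical estimate written out in full (sign control of $\theta_\eps(s)s$ for $\abs{s}>\delta_0+\eps$ via the compact support of $\mathfrak h_\eps$ and the Rauch gap, plus the $L^\infty_{loc}$ bound on the compact middle region), followed by the same integration. The only point worth keeping explicit is the normalization $\eps\leq 1$ so that $\rho_1,\rho_2$ are uniform in $\eps$ as well as in $u$, which you already address.
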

\begin{proof}
This is a classical result in the stationary case(cf. \cite[Lemma 3.2]{MBA}. It suffices to integrate over $t\in (0,T)$ to obtain the result.
\end{proof}

\begin{proposition}\label{pro2}
The sequence $(\theta_{\eps_m}(u_m))_{m\in\nat}$ is weakly precompact in $L^1((0,T)\times \partial{\Omega})$.
\end{proposition}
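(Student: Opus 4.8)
The plan is to invoke the Dunford--Pettis theorem: since $(0,T)\times\partial\Omega$ carries the finite measure $\mathrm{d}t\,\mathrm{d}\sigma$, a subset of $L^1((0,T)\times\partial\Omega)$ is relatively weakly compact if and only if it is bounded and uniformly integrable. Thus it suffices to show that $(\theta_{\eps_m}(u_{mN}))_m$ is bounded in $L^1$ and equi-integrable, and the whole argument rests on two ingredients: an a priori energy estimate for the Galerkin solutions $u_m$, and the sign structure that the Rauch assumption transmits to the mollified functions $\theta_{\eps_m}$.

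First I would derive the a priori bound. Testing $(\mathscr P^m_\eps)$ with $v=u_m(t)\in V_m$ and using that the trilinear term vanishes, $\la\mathscr B[u_m(t)],u_m(t)\ra=\int_\Omega(\rot\,u_m\times u_m)\cdot u_m\,\mathrm dx=0$, together with the coercivity $\la\mathscr Au_m,u_m\ra=\nu\norm{u_m}_V^2$ and $\la u_m'(t),u_m(t)\ra=\tfrac12\tfrac{\mathrm d}{\mathrm dt}\norm{u_m(t)}_H^2$, integration over $(0,T)$ yields
\[
\tfrac12\norm{u_m(T)}_H^2+\nu\int_0^T\norm{u_m(t)}_V^2\,\mathrm dt+\int_0^T\!\!\int_{\partial\Omega}\theta_{\eps_m}(u_{mN})u_{mN}\,\mathrm d\sigma\,\mathrm dt=\tfrac12\norm{u_{m0}}_H^2+\int_0^T\la f,u_m\ra\,\mathrm dt.
\]
Invoking Lemma~\ref{lem1} for the lower bound on the boundary term and Young's inequality on $\la f,u_m\ra$, and recalling that $(u_{m0})$ is bounded in $H$, I obtain that $(u_m)$ is bounded in $\mv\cap L^\infty(0,T;H)$. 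Feeding this back into the identity gives a two-sided bound; in particular there is a constant $C$, independent of $m$, with $\int_0^T\!\int_{\partial\Omega}\theta_{\eps_m}(u_{mN})u_{mN}\,\mathrm d\sigma\,\mathrm dt\le C$.

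Next I would record two pointwise facts about $\theta_{\eps_m}=\mathfrak h_{\eps_m}\star\theta$. Since $\theta\in L^\infty_{loc}(\real)$ and the mollifiers have shrinking support, for each $M>0$ there is $C_M=\sup_m\norm{\theta_{\eps_m}}_{L^\infty([-M,M])}<\infty$. Moreover the Rauch assumption $\esssup_{]-\infty,-\delta_0[}\theta\le0\le\essinf_{]\delta_0,+\infty[}\theta$ propagates through the convolution: for $\abs s>\delta_0+\eps_m$ the sign of $\theta_{\eps_m}(s)$ coincides with that of $s$, so $\theta_{\eps_m}(s)s\ge0$ and hence $\abs{\theta_{\eps_m}(s)}\le\tfrac1{\abs s}\,\theta_{\eps_m}(s)s$. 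Fixing $M>\delta_0+\sup_m\eps_m$ and splitting each integral (over a measurable set $E$) along $\{\abs{u_{mN}}\le M\}$ and $\{\abs{u_{mN}}>M\}$, the first region contributes at most $C_M\abs E$, while on the second $\abs{\theta_{\eps_m}(u_{mN})}\le\tfrac1M\theta_{\eps_m}(u_{mN})u_{mN}$, whose integral over $\{\abs{u_{mN}}>M\}$ is bounded by $C'/M$ for a constant $C'$ independent of $m$, using the nonnegativity of the integrand on $\{\abs{u_{mN}}>\delta_0+\eps_m\}$ and the upper bound $C$. Taking $E=(0,T)\times\partial\Omega$ gives the uniform $L^1$ bound; for equi-integrability, given $\eta>0$ I first choose $M$ large enough that the second contribution is below $\eta/2$ uniformly in $m$, then $\abs E$ small enough that $C_M\abs E<\eta/2$. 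Dunford--Pettis then concludes.

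The main obstacle is the two-sided control of the boundary term. Lemma~\ref{lem1} only furnishes a lower bound, whereas the decisive estimate $\abs{\theta_{\eps_m}(s)}\le\tfrac1M\theta_{\eps_m}(s)s$ requires the integral of $\theta_{\eps_m}(u_{mN})u_{mN}$ over the large-value set to be bounded \emph{above}; this is exactly what the energy identity supplies once $(u_m)$ is known to be bounded, so the coercivity estimate and Lemma~\ref{lem1} must be used in tandem before the sign trick can be applied. A secondary but genuine point is to keep the constants $C_M$ and the sign threshold uniform in $m$ despite $\eps_m$ varying, which is why the bound $\eps_m\le1$ (inherited from the support of $\mathfrak h$) and the shrinking-support property of the mollifiers are invoked.
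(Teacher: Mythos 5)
Your proof is correct and follows essentially the same route as the paper, which simply delegates to the stationary-case argument of \cite[Proposition 3.7]{MBA}: a Dunford--Pettis criterion verified by splitting over $\{|u_{mN}|\le M\}$ and $\{|u_{mN}|>M\}$, with the Rauch condition forcing $\theta_{\eps_m}(s)s\ge 0$ for $|s|>\delta_0+\eps_m$ and the energy identity supplying the uniform upper bound on $\int\!\!\int\theta_{\eps_m}(u_{mN})u_{mN}$. Your handling of the time-derivative term --- discarding $\tfrac12|u_m(T)|_H^2\ge 0$ after integrating over $(0,T)$ --- is in fact the correct reading of the paper's terse remark that ``$\la u_m'(t),u_m(t)\ra\ge 0$'' (which is false pointwise but harmless in integrated form), so no gap remains.
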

\begin{proof}
The proof is similar to \cite[Proposition 3.7]{MBA} with minor changes consisting mainly in replacing $\partial\Omega$ by $(0,T)\times \partial{\Omega}$ and remarking that $\la u_m'(t),u_m(t)\ra\geq 0$ for a.e $t\in[0,T]$.
 
\end{proof}

\begin{proposition}\label{prop2}
The regularized problem $(\mathscr P^m_\eps)$ has at leat one solution $u_m$.
\end{proposition}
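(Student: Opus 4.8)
The plan is to recognize that $(\mathscr P^m_\eps)$ is nothing but a Cauchy problem for a system of ordinary differential equations in $\real^m$, to solve it locally by the Carathéodory existence theorem, and then to extend the solution to all of $[0,T]$ by a global a priori bound. First I would write the unknown as $u_m(t)=\sum_{i=1}^m g_i(t)\,z_i$ with $g=(g_1,\dots,g_m)$ and insert this ansatz into the weak equation, testing successively against $v=z_j$, $j=1,\dots,m$. Since $\la u_m'(t),z_j\ra=\sum_i g_i'(t)\,(z_i,z_j)_H$, the system takes the form $Mg'(t)=F(t,g(t))$, where $M=[(z_i,z_j)_H]$ is the Gram matrix of the basis. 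Because $\{z_1,\dots,z_m\}$ are linearly independent in $V\hookrightarrow H$, $M$ is symmetric positive definite, hence invertible, and the system is equivalent to $g'(t)=M^{-1}F(t,g(t))$ with initial datum determined by $u_{m0}\in V_m$.

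Next I would check that the right-hand side satisfies the Carathéodory conditions. The component $F_j(t,g)=\la f(t),z_j\ra-\la\mathscr A u_m,z_j\ra-\la\mathscr B[u_m],z_j\ra-\ds\int_{\partial\Omega}\theta_{\eps_m}(u_{mN})\,z_{jN}\,{\rm d}\sigma$ is measurable in $t$ (the only genuinely $t$-dependent piece is $\la f(t),z_j\ra$, which lies in $L^2(0,T)$ since $f\in\mv^*$) and continuous in $g$: the term $\la\mathscr A u_m,z_j\ra$ is linear in $g$, $\la\mathscr B[u_m],z_j\ra$ is a quadratic polynomial in $g$, and the boundary integral depends continuously on $g$ because $\theta_{\eps_m}=\mathfrak h_{\eps_m}\star\theta$ is continuous (indeed smooth) as a mollification of a function in $L^\infty_{loc}(\real)$, so that $g\mapsto\theta_{\eps_m}(u_{mN})$ is continuous and bounded on bounded sets, which permits passage to the limit under the integral sign. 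Moreover $F$ is dominated on $[0,T]\times K$, for each compact $K\subset\real^m$, by a function in $L^2(0,T)$. The Carathéodory existence theorem then yields an absolutely continuous local solution $g$ on a maximal interval $[0,t_m)\subset[0,T]$.

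Finally I would rule out finite-time blow-up by the standard energy estimate. Testing the equation with $v=u_m(t)\in V_m$ gives, for a.e. $t$, the identity $\frac12\frac{d}{dt}\norm{u_m(t)}_H^2+\nu\norm{u_m(t)}_V^2+\ds\int_{\partial\Omega}\theta_{\eps_m}(u_{mN})\,u_{mN}\,{\rm d}\sigma=\la f(t),u_m(t)\ra$, where I have used the coercivity $\la\mathscr A u_m,u_m\ra=\nu\norm{u_m}_V^2$ together with the exact cancellation of the convective term, $\la\mathscr B[u_m],u_m\ra=\ds\int_\Omega(\rot u_m\times u_m)\cdot u_m\,{\rm d}x=0$. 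Integrating in time, the boundary term is bounded below by $-\rho_1\rho_2 T\,\sigma(\partial\Omega)$ thanks to Lemma \ref{lem1}, while $\la f(t),u_m(t)\ra$ is absorbed by Young's inequality into $\frac\nu2\int_0^t\norm{u_m}_V^2\,ds$ plus $\frac1{2\nu}\int_0^T\norm{f}_{V^*}^2\,ds$. This produces a bound on $\sup_t\norm{u_m(t)}_H^2$ and on $\int_0^T\norm{u_m}_V^2\,dt$ depending only on the data, not on the length of the existence interval. Since all norms on the finite-dimensional space $V_m$ are equivalent, $\norm{u_m(t)}_H$ controls $\abs{g(t)}$, so $g$ remains bounded and the local solution extends to the whole of $[0,T]$; the resulting bound on $F$ shows $g'\in L^2(0,T)$, hence $u_m'\in L^2(0,T;V_m)$ and $u_m\in\mw_m$, which is the desired solution.

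The main obstacle is the a priori estimate rather than the local solvability: one must combine the coercivity of $\mathscr A$, the exact vanishing of the Lamb convective form, and the lower bound for the regularized boundary integral coming from Lemma \ref{lem1}, so that the uniform bound is obtained \emph{without} any smallness assumption on the data. The other point deserving care is verifying that the mollified nonlinearity $\theta_{\eps_m}$ generates a genuinely continuous (Carathéodory) vector field, which is exactly what the regularization by convolution is designed to guarantee.
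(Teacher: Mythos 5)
Your proposal follows essentially the same route as the paper: the Galerkin ansatz $u_m(t)=\sum_k c_{km}(t)z_k$, inversion of the nonsingular Gram matrix to obtain a first-order ODE system in normal form, local solvability, and extension to $[0,T]$ via the energy estimate (which the paper defers to Lemma \ref{prop3} but which you correctly reproduce, including the coercivity of $\mathscr A$, the vanishing of $\la\mathscr B[u_m],u_m\ra$, and the lower bound from Lemma \ref{lem1}). Your explicit verification of the Carath\'eodory conditions and of the continuity of the mollified boundary term is a welcome elaboration of details the paper leaves implicit, but it does not change the argument.
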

\begin{proof}

We substitute $u_m(t)= \sum_{k=1}^m c_{km}(t)z_k$ in $(\mathscr P^m_\eps)$ to obtain

\begin{align}\label{eq}
\ds &\sum_{k=1}^m c_{km}'(t)\la z_k,z_i\ra+  \sum_{k=1}^m c_{km}(t)\la A z_k,z_i\ra+ \sum_{k,j=1}^m c_{km}(t)c_{jm}(t)b(z_k,z_j,z_i) \\ &+\int_{\partial\Omega} \theta_{\eps_m}(\sum_{k=1}^m c_{km}(t) z_k.n)z_i.n~ {\rm d}\sigma =\la f(t),z_i\ra, i=1,....m, \text{ a.e. } t\in [0,T] \nonumber\\
& c_{im}(0)=\alpha_{i0}, i=1,...,n.
\end{align}
The matrix with elements $\la z_k,z_i\ra, 1\leq i,k\leq m$ is nonsingular (i.e  $\det\{\la z_k,z_i\ra \}_{k,i= 1 }^m \neq 0$), we invert the matrix, then the equation \eqref{eq} can be written in the usual form:
\begin{align}\label{system}
c_{km}'(t)=\sum_{i=1}^m \beta_{ki}\la f(t),z_i\ra-\sum_{i=1}^m\alpha_{ki} c_{im}(t)- \sum_{i,j=1}\xi_{kij}c_{km}(t)c_{jm}(t) \\ -\sum_{i=1}^m \varrho_{ki}\int_{\partial\Omega} \theta_{\eps_m}(\sum_{k=1}^m c_{km}(t) z_k.\textbf{n})z_i.\textbf{n}~ {\rm d}\sigma \nonumber
\end{align}
where the initial values $c_{km}(0),\,k = 1,...,m$ are given, i.e.  $u_{m0}=\sum_{k=1}^m c_{km}(0) z_k$ 
\begin{equation}\label{initial}
c_{km}(0) \text{ is the } k^{\text{th}} \text{ component of  } u_{m0}.
\end{equation}
The differential system \eqref{system} with the initial condition \eqref{initial} define uniquely the scalar $c_{km}$ on the interval $[0,t_m)$. Then the solution $u_m$ exists on $[0,t_m)$, we can extended it on the closed interval $[0,T]$ by using a priori estimates in Lemma \ref{prop3}.
Since the scalar function $t \to \la f(t),z_i\ra $ in the equation \eqref{eq} are square integrable, so are the functions $c_{km}$ and therefore, for each $m$ we have:
\begin{equation}\label{eq15}
u_m \in L^2(0,T;V) \text{ and } u_m' \in L^2(0,T; V^*).
\end{equation}

\end{proof}

\section{Existence result}
 
 In this section we will prove the existence of solutions to the problem (EHVI) by analysing the convergence of the sequence $(u_m)_m$ solutions to $(\mathscr P^m_\eps)$. To do so we need some a priori estimates
 
\begin{lemma}\label{prop3}
The solution $\{u_m\}_m$ is bounded in $L^2(0,T;V)\cap L^\infty(0,T;H)$.
\end{lemma}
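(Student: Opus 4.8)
The plan is to derive the classical energy estimate by testing the finite-dimensional system $(\mathscr P^m_\eps)$ against the solution itself. Since $u_m(t)\in V_m$ for a.e.\ $t\in(0,T)$, I may take $v=u_m(t)$ in the Galerkin equation and then integrate the resulting scalar identity in time.

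First I would treat the four terms separately. The time-derivative term yields $\langle u_m'(t),u_m(t)\rangle=\frac12\frac{d}{dt}\norm{u_m(t)}_H^2$, which is legitimate because $u_m\in\mw_m$. The viscous term is where coercivity enters: $\langle \mathscr A u_m(t),u_m(t)\rangle=\nu\int_\Omega\abs{\rot\,u_m(t)}^2\,{\rm d}x=\nu\norm{u_m(t)}_V^2$, using that $(\!(\cdot,\cdot)\!)_V$ generates a norm on $V$ equivalent to the $H^1$-norm. The crucial simplification comes from the convective term written in Lamb form: since $\rot\,u_m\times u_m$ is pointwise orthogonal to $u_m$, one has $\langle \mathscr B[u_m(t)],u_m(t)\rangle=\int_\Omega(\rot\,u_m\times u_m)\cdot u_m\,{\rm d}x=0$, so the nonlinearity drops out of the energy balance entirely. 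For the boundary term I would invoke the pointwise-in-$t$ (stationary) form of Lemma \ref{lem1}, from which Lemma \ref{lem1} itself is obtained by integration, giving $\int_{\partial\Omega}\theta_{\eps_m}(u_{mN}(t))\,u_{mN}(t)\,{\rm d}\sigma\geq-\rho_1\rho_2\,\sigma(\partial{\Omega})$, a bound independent of $m$ and $t$. This is precisely the step where the Rauch condition does the work usually done by a growth-plus-smallness hypothesis.

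Next I would estimate the right-hand side by Cauchy--Schwarz and Young's inequality, $\langle f(t),u_m(t)\rangle\leq\frac{\nu}{2}\norm{u_m(t)}_V^2+\frac{1}{2\nu}\norm{f(t)}_{V^*}^2$, so that the factor $\frac{\nu}{2}\norm{u_m(t)}_V^2$ can be absorbed into the coercive term on the left. Collecting everything leaves
\[
\frac{1}{2}\frac{d}{dt}\norm{u_m(t)}_H^2+\frac{\nu}{2}\norm{u_m(t)}_V^2\leq\rho_1\rho_2\,\sigma(\partial{\Omega})+\frac{1}{2\nu}\norm{f(t)}_{V^*}^2 .
\]
Integrating over $(0,\tau)$ for an arbitrary $\tau\in(0,T]$ and using that $\norm{u_{m0}}_H$ is bounded (because $u_{m0}\to u_0$ in $H$) together with $f\in\mv^*=L^2(0,T;V^*)$ bounds the right-hand side by a constant $C$ independent of $m$. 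Taking the supremum over $\tau$ in the first term gives the bound in $L^\infty(0,T;H)$, while discarding the first term and retaining the integral gives the bound in $L^2(0,T;V)$; together these establish the claim.

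I expect no serious obstacle, since this is the classical energy argument for Navier--Stokes. The only two points requiring care are the vanishing of the convective term, which hinges on the orthogonality $(\rot\,u\times u)\cdot u=0$ intrinsic to the Lamb formulation, and the lower bound on the nonmonotone boundary integral. The latter is the genuinely delicate ingredient: it must follow from the Rauch condition alone, \emph{without} any smallness assumption linking $\nu$, the trace-operator norm and a growth constant, which is exactly the advantage advertised in the introduction. Note finally that, thanks to the clean structure of the inequality above, no Gr\"onwall argument is needed --- direct integration in time suffices.
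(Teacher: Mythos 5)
Your proposal is correct and follows essentially the same route as the paper: test $(\mathscr P^m_\eps)$ with $v=u_m(t)$, use $\la u_m',u_m\ra=\frac12\frac{d}{dt}|u_m|^2$, the coercivity of $\mathscr A$, the vanishing of $\la\mathscr B[u_m],u_m\ra$ (which the paper uses silently when passing from \eqref{eq16} to \eqref{eq17}), the Rauch-based lower bound of Lemma \ref{lem1} on the boundary integral, Young's inequality on $\la f,u_m\ra$, and integration in time. The only cosmetic differences are that you make the orthogonality $(\rot u\times u)\cdot u=0$ explicit and apply the stationary form of Lemma \ref{lem1} before integrating rather than after, neither of which changes the argument.
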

\begin{proof}
From Proposition \ref{pro2}, the regularized problem  $(\mathscr P^m_\eps)$ has at least one solution $\{u_m\}_m$. By replacing $v$ by $u_m(t)$ in $(\mathscr P^m_\eps)$, we get for a.e $t\in [0,T]$:
\begin{equation}\label{eq16}
\la  u'_m(t)+ \mathscr A u_m(t)+ \mathscr B [u_m(t)],u_m(t)\ra  
+\int_{ \partial \Omega}\theta_{\eps_m}(u_{mN}(t,x))u_{mN}(t,x) {\rm d}\sigma = \la f(t), u_m(t) \ra
\end{equation}
Because of \eqref{eq15} we have:
\begin{equation}
 \la  u'_m(t),u_m(t) \ra = \frac{1}{2}\ds\frac{\rm d}{\rm dt}\abs{u_m(t)}^2
\end{equation}
Then the equation \eqref{eq16} becomes:
\begin{align}\label{eq17}
\frac{d}{dt}\abs{u_m(t)}^2&+  2 \la \mathscr A u_m(t), u_m(t) \ra +2 \int_{\partial\Omega}\theta_{\eps_m}(u_{mN}(t,x))u_{mN}(t,x) {\rm d}\sigma \\ &
=2 \la f(t), u_m(t) \ra_{V^*} \text{ for a.e. } t\in[0,T]\nonumber
\end{align}
By the coerciveness of $\mathscr A$, Cauchy-Schwartz inequality and Young inequality we obtain
\[
\frac{d}{dt} \abs{u_m(t)}^2+M\norm{u_m(t)}^2+2 \int_{\partial\Omega}\theta_{\eps_m}(u_{mN}(t,x))u_{mN}(t,x) {\rm d}\sigma\leq \frac{2}{M}\norm{f(t)}_{V^*}
\]for a.e. $t\in (0,T)$( $M$ is the constant of coercivity). Integrating the previous equation from 0 to $s$, $0\leq s\leq T$ and using Lemma \ref{lem1}, one has: 

\begin{equation}\label{18}
\abs{u_m(s)}^2 +M\int_0^s\norm{u_m(\tau)}_V^2{\rm d}\tau \leq \frac{2}{M}\int_0^s \norm{f(\tau)}^2_{V^*}\,{\rm d}\tau+2\rho_1\rho_2 s. \sigma(\partial{\Omega}) +\abs{u_m(0)}^2
\end{equation}
Hence
\begin{align*}
\sup_{s\in[0,T]}\abs{u_m(s)}^2 \leq  \frac{2}{M} \norm{f}^2_{\mv^*}+2\rho_1\rho_2 T. \sigma(\partial{\Omega}) +\abs{u_m(0)}^2,
\end{align*}
The right hand side of the previous inequality, is finite and independent of $m$. We deduce that $\{u_m\}_m$ is bounded in $L^\infty(0,T;H)$.

Again from \eqref{18} we have:

\begin{equation*}
 M \int_0^T \norm{u_m(t)}_V^2{\rm d}t \leq \frac{2}{M} \norm{f}^2_{\mv^*}+2\rho_1\rho_2 T. \sigma(\partial{\Omega}) + \abs{u_m(0)}^2.
\end{equation*}
Then:
\begin{equation}\label{25}
 \norm{u_m}^2_{\mv} \leq \frac{2}{M^2} \norm{f}^2_{\mv^*}+\frac{2}{M}\rho_1\rho_2 T. \sigma(\partial{\Omega}) + \frac{1}{M}\abs{u_m(0)}^2
\end{equation}
then $\{u_m\}_m$  remains in a bounded subset of $\mv$. 

\end{proof}

\begin{theorem}\label{existence}
Under assumption $H(\theta)$, the problem (EHVI) has at least one solution.
\end{theorem}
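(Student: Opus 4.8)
The plan is to let $m\to\infty$ (so that $\eps_m\to 0^+$) in the Galerkin solutions $u_m$ of $(\mathscr P^m_\eps)$ and to identify the limit, together with a weak-$L^1$ limit of the regularised boundary terms, as a solution in the sense of the modified definition. The a priori bounds of Lemma \ref{prop3} already place $\{u_m\}$ in a bounded set of $\mv\cap L^\infty(0,T;H)$; I would first complement them with an estimate on $\{u_m'\}$. Isolating $u_m'(t)$ in $(\mathscr P^m_\eps)$ and testing against $v\in V\cap L_N^\infty(\partial\Omega)$, the terms $f$ and $\mathscr Au_m$ are controlled in $\mv^*$, the trilinear term $\mathscr B[u_m]$ is handled by the usual interpolation estimates valid for $d=2,3$ since $u_m\in \mv\cap L^\infty(0,T;H)$, and the boundary integral is bounded in $L^1(0,T)$ against $\norm{v_N}_{L^\infty(\partial\Omega)}$ thanks to Lemma \ref{lem1} and Proposition \ref{pro2}. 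This yields a uniform bound on $\{u_m'\}$ in the dual of $L^p(0,T;V\cap L_N^\infty(\partial\Omega))$ for an appropriate $p$.

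Next I would extract a subsequence, not relabelled, such that $u_m\rightharpoonup u$ in $\mv$, $u_m\stackrel{*}{\rightharpoonup}u$ in $L^\infty(0,T;H)$, $u_m'\rightharpoonup u'$ in the dual space above, and $\theta_{\eps_m}(u_m)\rightharpoonup\kappa$ in $L^1((0,T)\times\partial\Omega)$; the last convergence is legitimate because Proposition \ref{pro2} furnishes the uniform integrability required by the Dunford--Pettis theorem. The decisive compactness input is the Aubin--Lions--Simon lemma applied with $V\hookrightarrow\hookrightarrow H\hookrightarrow (V\cap L_N^\infty(\partial\Omega))^*$, which gives strong convergence $u_m\to u$ in $\mh$ and hence, after passing to a further subsequence, $u_m\to u$ a.e. on $\Omega_T$. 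Continuity and compactness of the trace map $\gamma$ then propagate this to $u_{mN}\to u_N$ in $L^2((0,T)\times\partial\Omega)$ and a.e. on $(0,T)\times\partial\Omega$.

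I would then pass to the limit in $(\mathscr P^m_\eps)$ tested against a fixed $v\in V_k$, afterwards letting $k\to\infty$ using $\overline{\cup_m V_m}=V\cap L_N^\infty(\partial\Omega)$. The linear term converges by weak convergence in $\mv$, and the boundary term converges since $\theta_{\eps_m}(u_{mN})\rightharpoonup\kappa$ weakly in $L^1$ while $v_N\in L^\infty(\partial\Omega)$, giving $\int_{\partial\Omega}\theta_{\eps_m}(u_{mN})v_N\,{\rm d}\sigma\to\int_{\partial\Omega}\kappa\, v_N\,{\rm d}\sigma$. The first genuine obstacle is the passage to the limit in the nonlinear term $\mathscr B[u_m]$: following the argument of \cite{M07HE}, I would combine the strong $\mh$-convergence with the weak $\mv$-convergence and exploit the algebraic structure of the trilinear form $\la\mathscr B(u,v),w\ra=\int_\Omega(\rot u\times v)\cdot w\,{\rm d}x$ to identify its weak limit with $\mathscr B[u]$, the three-dimensional case requiring particular care because of the limited integrability of the convective term.

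The second and main obstacle is to show that the limit $\kappa$ satisfies the inclusion $\kappa(t,z)\in\partial j(u_N(t,z))=\widehat\theta(u_N(t,z))=[\underline\theta(u_N(t,z)),\overline\theta(u_N(t,z))]$ for a.e. $(t,z)$. Here I would exploit the a.e. convergence $u_{mN}\to u_N$ on $(0,T)\times\partial\Omega$ together with the mollification structure $\theta_{\eps_m}=\mathfrak h_{\eps_m}\star\theta$ and the monotone one-sided limits $\underline\theta,\overline\theta$ of the filling-in-the-gaps construction: a Mazur-type convexity argument shows that the weak-$L^1$ limit of $\theta_{\eps_m}(u_{mN})$ cannot escape the closed interval $[\underline\theta(u_N),\overline\theta(u_N)]$, which is precisely the closedness of the graph of $\widehat\theta$ under joint weak-$L^1$ and a.e. convergence. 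Finally, the embedding $\mw\hookrightarrow C(0,T;H)$ and the assumption $u_{m0}\to u_0$ in $H$ recover the initial condition $u(0)=u_0$. Assembling the limiting identity, the inclusion $\kappa\in\partial j(u_N)$ and the initial condition shows that $(u,\kappa)$ meets the modified definition of a solution, which proves the theorem.
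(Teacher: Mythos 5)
Your proposal follows essentially the same route as the paper: Galerkin approximation, the a priori bounds of Lemma \ref{prop3} and the weak $L^1$ precompactness of Proposition \ref{pro2}, compactness of the embedding and of the trace to get a.e. convergence of $u_{mN}$, the argument of \cite{M07HE} for the convective term, and identification of $\kappa$ inside $[\underline\theta(u_N),\overline\theta(u_N)]$. The only local difference is that you close the inclusion $\kappa\in\widehat\theta(u_N)$ by a Mazur-type argument, whereas the paper tests the pointwise sandwich $\underline\theta_\mu(u_N)\leq\theta_{\eps_m}(u_{mN})\leq\overline\theta_\mu(u_N)$ (obtained via Egoroff) against nonnegative $L^\infty$ functions and passes to the limit in the integrals; the two devices are equivalent here.
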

\begin{proof} From Proposition \ref{pro2} and Proposition \ref{prop3}, we get
\begin{align}
u_m &\longrightarrow u \text{ weakly in } \mv \\
u_m& \longrightarrow u \text{ weakly-star  in } L^\infty(0,T;H)\\
\theta_{\eps_m}(u_{mN})& \longrightarrow \kappa \text{ weakly in } L^1((0,T)\times \partial\Omega)
\end{align}

Now, we focus on the weak convergence of the nonlinear term $\mathscr B[u_m]$ by using exactly the same procedure as in \cite{M07HE}. For the case $d=2$, we obtain from Temam \cite{Temam} that 
 \[ \norm{\mathscr B[u_m]}_{\mv^*} \leq c\norm{u_m}_{L^\infty(0,T;H)}\norm{u_m}_{\mv} \text{ with } c>0. \]
Moreover, the operator $\mathscr A$ is continuous. Hence $\{u'_m\}$ is
bounded in $\mv^*$. Thus, by passing to a next subsequence, if necessary, it follows
 \[  u'_m \longrightarrow u' \text{ weakly in } \mw.\]
Using the facts that $\mw \subset C(0,T;H)$ continuously, $\mw \subset \mh$ compactly and $\mw \subset L^2(0,T;L^2(\Gamma, \real^n))$ compactly, we have $u \in C(0,T;H)$ and
\[ u_m \to  u \text{ in } \mh,\,\, \gamma(u_m) \to \gamma(u) \text{ in } L^2(0,T;L^2(\Gamma;\real^d)).\]
Since $ u_m \to u$ weakly in $\mv$ and in $\mh$, analogously as in Ahmed \cite{ahmed92}, we have $B[u_m] \to
B[u]$ weakly in $\mv^*$. We remark that if $d=3$ we also have the convergence of $B[u_m] \to B[u]$ weakly in $\mv^*$ by a compactness embedding theorem as in \cite{ahmed92}.

Let $ \phi\in C_0^\infty(0,T)$ and $v \in V\cap L^\infty_N(\partial\Omega)$ . Then, there exists $\{v_m\}_{m\in\mathbb N}$ such that $v_m \in V_m$ and  $v_m \to v$ in $\mv$, as $ m \to \infty$. Denoting $\psi_m(x,t)= \phi(t) v_m(x)$ and $\psi (x,t)= \phi(t) v(x)$, we have $\psi_n \to \psi $ in $\mw$. From $(\mathscr P_\varepsilon^m)$, it follows 

\begin{align}\label{20} 
\int_0^T \la u'_m(t)+&\mathscr Au_m(t)+\mathscr B[u_m(t)],\psi_m(t)\ra\\
+& \int_{(0,T)\times \partial\Omega}\theta_{\eps_m}(u_{mN}(t))\psi_{Nm}(t) {\rm d}\sigma {\rm d}t=\int_0^T \la f(t),\psi_m(t)\ra\, dt .
\end{align}
Using the above convergences, letting $ m\to +\infty$, we obtain:
\[ \int_0^T \la u'(t)+\mathscr Au(t)+\mathscr B[u(t)],v\ra \phi(t) {\rm d}t +\int_{(0,T)\times\partial\Omega} \kappa
. v_N  \phi (t)  { \rm d}t\,d\sigma=\int_0^T \la f(t),v\ra \phi(t) { \rm d}t.\]
Since $\phi$ is arbitrary, we deduce that
\[ \la u'(t)+\mathscr A u(t)+\mathscr B[u(t)],v\ra+ \int_{\partial\Omega} \kappa
. v_N   { \rm d}\sigma= \la f(t),v\ra\]
for a.e. $t \in (0,T)$ and for all $v\in V\cap L^\infty_N(\partial\Omega)$. 
 
In order to complete the proof it will be shown that
\begin{equation*}
\kappa(t,z) \in \widehat{\theta}(u_N(t,z))=\ds \partial^c j(u_N(t,z)),\quad\text{ for a.e }  (t,z)\in [0,T]\times\partial{\Omega}.
\end{equation*}
Since $\gamma(u_m)\to\gamma(u)$ in $L^2(0,T;L^2(\partial\Omega))$, we obtain $u_{mN}\to u_N$ in $L^2(0,T;L^2(\partial\Omega))$ and consequently $u_{mN}(t,x)\to u_N(t,x)$ for a.e. $(t,x)\in [0,T]\times\partial\Omega$, then by applying Egoroff's theorem we can find that for any $\alpha > 0$ we can determine $\omega\subset [0,T]\times\partial{\Omega}$   with  $\sigma(\omega) <\alpha$  such that
\begin{equation*}
u_{mN}\rightarrow u_N,\quad\text{ uniformly on } [0,T]\times\partial{\Omega} \setminus \omega,
\end{equation*} with $u_N\in \mathrm L^{\infty}([0,T]\times\partial{\Omega} \setminus \omega)$. Thus for any $\alpha > 0$ we can find $\omega \subset [0,T]\times\partial{\Omega}$ with $\sigma(\omega) <\alpha$
such that for any $\mu> 0$ and for $\varepsilon < \varepsilon_0 < \mu/2$ and $n > n_0 > 2/\mu$ we have
\begin{equation*}
|u_{mN}- u_N|<\frac{\mu}{2},\quad\text{ on } [0,T]\times\partial{\Omega}\setminus\omega.
\end{equation*}
Consequently, one obtain that

\begin{equation*}
\begin{array}{ll}
\theta_{\eps}(u_{mN})&\leq \displaystyle\esssup_{|u_{mN}-  \xi|\leq \varepsilon} \theta(\xi)\\
&\leq \displaystyle\esssup_{|u_{mN}-  \xi|\leq  \frac{\mu}{2}} \theta(\xi)\\
&\leq \displaystyle\esssup_{|u_N-  \xi|\leq  \mu} \theta(\xi)\\
&= \overline{\theta}_\mu(u_N)
\end{array}
\end{equation*}
Analogously we prove the inequality

\begin{equation*}
 \underline{\theta}_\mu(u_N)=\displaystyle\essinf_{|u_N -\xi|\leq \mu}\theta(\xi)\leq \theta_\eps(u_{mN})
\end{equation*}

We take now $v \geq 0$ a.e. on $[0,T]\times\partial{\Omega}\setminus\omega$  with $v\in \mathrm L^\infty([0,T]\times\partial{\Omega}\setminus\omega ) $. This implies 

\begin{equation*}
\int_{ [0,T]\times\partial{\Omega}\setminus\omega} \underline{\theta}_\mu(u_N)\,v {\rm d}\sigma\leq \int_{[0,T]\times \partial{\Omega}\setminus\omega} \theta_\eps(u_{mN} )\,v {\rm d}\sigma\leq 
\int_{ [0,T]\times\partial{\Omega}\setminus\omega}\overline{\theta}_\mu(u_N)\,v {\rm d}\sigma
\end{equation*}
Taking the limits as $\eps \rightarrow 0$ and $m\rightarrow \infty$ we obtain that
\begin{equation*}
\int_{ [0,T]\times\partial{\Omega}\setminus\omega} \underline{\theta}_\mu(u_N)\,v {\rm d}\sigma\leq \int_{[0,T]\times \partial{\Omega}\setminus\omega} \kappa\, v {\rm d}\sigma\leq\int_{ \partial{[0,T]\times\Omega}\setminus\omega}\overline{\theta}_\mu(u_N)\,v {\rm d}\sigma
\end{equation*}
and as $\mu\rightarrow 0^+$  that
\begin{equation*}
\int_{ [0,T]\times\partial{\Omega}\setminus\omega} \underline{\theta}(u_N)\, v {\rm d}\sigma\leq \int_{[0,T]\times \partial{\Omega}\setminus\omega} \kappa\, v {\rm d}\sigma\leq\int_{[0,T]\times \partial{\Omega}\setminus \omega}\overline{\theta}(u_N)\,v {\rm d}\sigma
\end{equation*}

Since $v$ is arbitrary we have that

\begin{equation*}
\kappa\in [ \underline{\theta}(u_N), \overline{\theta}(u_N)     ]=  \widehat{\theta}(u_N)
\end{equation*}

where $\sigma(\partial\Omega) < \alpha $. For  $\alpha $ as small as possible, we obtain the result.
\end{proof}

\section{Optimal Control}

In this section, we provide a result on dependence of solutions with respect to the density of the external forces and use it to study the distributed parameter optimal control problem corresponding to it.

Let $f\in L^2(0,T;V^*)$. Under $H(\theta)$, we denote by $S^\theta_{u_0}(f)\subset\mathcal V$ the solution set corresponding to $f$ of the problem (EHVI). That is, $u\in\mathcal W$ and there exists $\kappa\in L^1([0,T]\times\partial\Omega)$ such that $u(0)=u_0\in H$, $\kappa\in\partial j(u_N)=\widehat\theta(u_N)$ and
\[
\langle u'(t)+\mathscr A u(t)+ \mathscr B[u(t)],v\rangle+\int_{\partial{\Omega}} \kappa . v_N(z)\,d\sigma(z)=\la f(t),v\ra
\]for a.e  $t\in[0,T]$ and all $v\in V\cap L_N^\infty(\partial\Omega)$ .

\begin{lemma}
Let $f\in L^2(0,T;V^*)$. For every $u\in S^\theta_{u_0}(f)$, there exist $\delta_0,\, \delta_1>0$ such that  
\[
\int_{[0,T]\times\partial\Omega} \kappa .u_N\,d\sigma\,dt\geq -\delta_0\,\delta_1 T\sigma(\partial\Omega)
\]where $\delta_0$ and $\delta_1$ are independent from $u$.

\end{lemma}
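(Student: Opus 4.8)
The plan is to prove this as the continuous, subdifferential analogue of Lemma~\ref{lem1}: where that lemma bounds $\int \theta_\eps(u_N)\,u_N$ from below using the regularization $\theta_\eps$, here the measurable selection $\kappa(t,z)\in\widehat\theta(u_N(t,z))=[\underline\theta(u_N),\overline\theta(u_N)]$ plays the role formerly filled by $\theta_\eps(u_N)$. The whole argument rests on splitting the domain $[0,T]\times\partial\Omega$ according to the magnitude of $u_N$ relative to the Rauch constant $\delta_0$, and exploiting the sign structure that the Rauch assumption forces on the multifunction $\widehat\theta$.

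First I would record the sign properties inherited by $\widehat\theta$ from the Rauch condition. If $s>\delta_0$, then for $\eps$ small enough that $s-\eps>\delta_0$ one has $\essinf_{|s-r|\le\eps}\theta(r)\ge \essinf_{]\delta_0,+\infty[}\theta\ge 0$, so letting $\eps\to0^+$ gives $\underline\theta(s)\ge0$; symmetrically, if $s<-\delta_0$ then $\overline\theta(s)\le 0$. Consequently, on the set $\{(t,z):|u_N(t,z)|>\delta_0\}$ the selection satisfies $\kappa\ge\underline\theta(u_N)\ge0$ wherever $u_N>\delta_0$ and $\kappa\le\overline\theta(u_N)\le0$ wherever $u_N<-\delta_0$; in both cases the product $\kappa\,u_N$ is nonnegative pointwise a.e., so the integral of $\kappa\,u_N$ over this far region is $\ge 0$.

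Next I would control the complementary set $\{|u_N|\le\delta_0\}$. Since $\theta\in L^\infty_{loc}(\real)$ by the Chang assumption, the endpoints $\underline\theta,\overline\theta$, and hence every selection $\kappa\in\widehat\theta(u_N)$, are uniformly bounded for arguments in the compact range $[-\delta_0,\delta_0]$; I would set $\delta_1$ to be such a local bound (taking the $\esssup$ of $|\theta|$ on a slightly enlarged interval $[-\delta_0-1,\delta_0+1]$ to absorb the $\eps$ in the limits defining $\underline\theta,\overline\theta$), a quantity depending only on $\theta$ and $\delta_0$, not on $u$. Then $|\kappa\,u_N|\le\delta_0\delta_1$ a.e. on $\{|u_N|\le\delta_0\}$, a set of measure at most $T\,\sigma(\partial\Omega)$, which yields $\int_{\{|u_N|\le\delta_0\}}\kappa\,u_N\,d\sigma\,dt\ge -\delta_0\delta_1\,T\,\sigma(\partial\Omega)$. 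Adding the two contributions gives the claim, with $\delta_0$ the Rauch constant and $\delta_1$ manifestly independent of $u$.

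The step requiring the most care, the main obstacle, is the passage from the Rauch inequality on $\theta$ to the pointwise sign of an \emph{arbitrary} measurable selection $\kappa$ of $\widehat\theta(u_N)$: one must justify $\underline\theta(s)\ge0$ for $s>\delta_0$ (and dually $\overline\theta(s)\le0$ for $s<-\delta_0$) directly from the $\lim_{\eps\to0^+}\essinf$ / $\esssup$ definitions, and then confirm that $\kappa\,u_N$ is genuinely nonnegative a.e. on the far region rather than merely in an averaged sense. Everything else, namely measurability of the splitting sets, the local $L^\infty$ bound, and the trivial $t$-integration, is routine.
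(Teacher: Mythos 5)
Your proof is correct and follows essentially the same route as the paper's: both split $[0,T]\times\partial\Omega$ according to whether $|u_N|$ exceeds the Rauch constant $\delta_0$, deduce from the Rauch condition that $\kappa\,u_N\geq 0$ pointwise on the far region, and bound $|\kappa|$ on $\{|u_N|\leq\delta_0\}$ by a local essential supremum $\delta_1$ of $|\theta|$, yielding the $-\delta_0\delta_1 T\sigma(\partial\Omega)$ term. Your direct derivation of the sign of $\kappa$ from the definitions of $\underline\theta$ and $\overline\theta$ is a bit cleaner than the paper's $\varepsilon$--$\underline\delta\wedge\overline\delta$ argument, and your choice of $\delta_1$ as an essential supremum over a slightly enlarged interval is actually the more careful one, since $\widehat\theta(s)$ for $|s|=\delta_0$ can see values of $\theta$ just outside $[-\delta_0,\delta_0]$.
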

\begin{proof}
By definition of $\underline\theta(u_N)$ and $\overline\theta(u_N)$ we have for every $\varepsilon>0$, there exists $\underline\delta$ with $|\mu|<\underline\delta$ such that 
\[
\underline{\theta}_\mu(u_N)-\varepsilon\leq \underline{\theta}(u_N)\leq \underline{\theta}_\mu(u_N)+\varepsilon
\]
and there exists $\overline\delta$ with $|\mu|<\overline\delta$ such that 
\[
\overline{\theta}_\mu(u_N)-\varepsilon\leq \overline{\theta}(u_N)\leq \overline{\theta}_\mu(u_N)+\varepsilon
\]
It follows that 
\[
\underline{\theta}_\mu(u_N)-\varepsilon\leq\kappa\leq \overline{\theta}_\mu(u_N)+\varepsilon 
\]
That is
\[
\displaystyle\essinf_{|u_N-s|<\mu}\theta(s)-\varepsilon\leq\kappa\leq  \displaystyle\esssup_{|u_N-s|<\mu}\theta(s)+\varepsilon
\]
Enlarging the bounds, we obtain 
\[
\displaystyle\essinf_{u_N-\mu\leq s<+\infty}\theta(s)-\varepsilon\leq \kappa\leq\displaystyle\esssup_{-\infty<s\leq u_N+\mu}\theta(s)+\varepsilon
\]
For $\varepsilon$ small enough and $|\mu|<\underline\delta,\,\overline\delta$, one have
\[
\displaystyle\essinf_{u_N-\underline\delta\wedge\overline\delta\leq s<+\infty}\theta(s)-\varepsilon\leq \kappa\leq\displaystyle\esssup_{-\infty<s\leq u_N+\underline\delta\wedge\overline\delta}\theta(s)+\varepsilon
\]
Consequently
\[
\displaystyle\sup_{u_N\in]-\infty,-\delta_0-\underline\delta\wedge\overline\delta[}\kappa\leq \displaystyle\esssup_{]-\infty,-\delta_0[}\theta(s)+\varepsilon
\]and
\[
\displaystyle\inf_{u_N\in]\delta_0+\underline\delta\wedge\overline\delta,+\infty[}\kappa\geq \displaystyle\essinf_{]\delta_0,+\infty[}\theta(s)-\varepsilon
\]where $\delta_0$ is defined in $H(\theta)$. Thus from $H(\theta)$ we obtain
\[
\displaystyle\sup_{u_N\in]-\infty,-\delta_0-\underline\delta\wedge\overline\delta[}\kappa\leq \varepsilon
\]and
\[
\displaystyle\inf_{u_N\in]\delta_0+\underline\delta\wedge\overline\delta,+\infty[}\kappa\geq-\varepsilon
\]
It results that $\kappa\leq \varepsilon$ if $u_N<-\delta_0-\underline\delta\wedge\overline\delta<-\delta_0$ and $\kappa\geq - \varepsilon$ if $u_N>\delta_0+\underline\delta\wedge\overline\delta>\delta_0$. We let $\varepsilon\to 0^+$ to get that $\kappa\leq 0$ if $u_N<-\delta_0$ and $\kappa\geq 0$ if $u_N>\delta_0$. Consequently, as $\theta\in L^\infty_{\text{loc}}(\mathbb R)$ and $\kappa\in\widehat\theta(u_N)$, then in the case $|u_N|\leq\delta_0$ we have 
\[
\displaystyle\sup_{|u_N|\leq\delta_0}|\kappa|\leq\displaystyle\esssup_{|s|\leq \delta_0}|\theta(s)|:=\delta_1
\]
It follows
\begin{align*}
\int_{[0,T]\times\partial\Omega}\kappa.u_N\,d\sigma\,dt=&\int_{\{u_N<-\delta_0\}}\kappa.u_N\,d\sigma\,dt+\int_{\{u_N>\delta_0\}}\kappa.u_N\,d\sigma\,dt\\
&\qquad\qquad\qquad+\int_{\{|u_N|\leq\delta_0\}}\kappa.u_N\,d\sigma\,dt\\
\geq&\, 0 -\delta_0\int_{\{|u_N|\leq\delta_0
\}}|\kappa|\,d\sigma\,dt\\
\geq & -\delta_0\delta_1 T\sigma(\partial\Omega)
\end{align*}

\end{proof}

\begin{theorem}\label{fn}
Under $H(\theta)$ assume that $f_m$, $f\in L^2(0,T;V^*)$ such that $f_m\rightarrow f$ weakly in $\mv^*$. Let $\{u_m\}_m\subset\mathcal W$ be a sequence such that $u_m\in S^\theta_{u_0}(f_m)$ for each $m\in\mathbb N$, then we can find a subsequence (still denoted with the same symbol) such that $u_m \rightarrow u$ weakly in $\mathcal V$ and $u\in S^\theta_{u_0 }(f)$.

\end{theorem}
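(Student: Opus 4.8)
The plan is to establish a stability result by transferring the uniform-in-$m$ a priori estimates of Section~4 to the sequence $\{u_m\}_m$ and then passing to the limit in the defining relation of $S^\theta_{u_0}(f_m)$, using the previous Lemma to control the boundary term uniformly. First I would note that since $f_m \rightharpoonup f$ weakly in $\mathcal V^*$, the sequence $\{f_m\}_m$ is bounded in $\mathcal V^*$; feeding this bound into the energy estimate \eqref{25} (whose right-hand side depends on $f_m$ only through $\norm{f_m}_{\mathcal V^*}$ and on the $f_m$-independent constants $\delta_0,\delta_1$ supplied by the preceding Lemma, in place of $\rho_1,\rho_2$) yields that $\{u_m\}_m$ is bounded in $\mathcal V \cap L^\infty(0,T;H)$ uniformly in $m$. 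The nonlinear-term estimate $\norm{\mathscr B[u_m]}_{\mathcal V^*}\le c\,\norm{u_m}_{L^\infty(0,T;H)}\norm{u_m}_{\mathcal V}$ together with continuity of $\mathscr A$ then bounds $\{u_m'\}_m$ in $\mathcal V^*$, so $\{u_m\}_m$ is bounded in $\mathcal W$.

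Next I would extract, along a subsequence, the convergences that drove the proof of Theorem~\ref{existence}: $u_m \rightharpoonup u$ weakly in $\mathcal V$ and weakly-star in $L^\infty(0,T;H)$, $u_m' \rightharpoonup u'$ weakly in $\mathcal V^*$, and (via $\mathcal W \subset \mathcal H$ and $\mathcal W \subset L^2(0,T;L^2(\partial\Omega;\real^d))$ compactly) strong convergence $u_m \to u$ in $\mathcal H$ and $\gamma(u_m)\to\gamma(u)$ in $L^2(0,T;L^2(\partial\Omega;\real^d))$, whence $u_{mN}\to u_N$ strongly and a.e.\ on $(0,T)\times\partial\Omega$. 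The compactness argument of \cite{ahmed92} then gives $\mathscr B[u_m]\rightharpoonup\mathscr B[u]$ weakly in $\mathcal V^*$ in both dimensions $d=2,3$. For the multiplier sequence $\{\kappa_m\}_m$ with $\kappa_m\in\widehat\theta(u_{mN})$, the key point is that the preceding Lemma furnishes $f$-independent (hence $m$-independent) growth control forcing $\{\kappa_m\}_m$ to be weakly precompact in $L^1((0,T)\times\partial\Omega)$ exactly as in Proposition~\ref{pro2}; thus $\kappa_m\rightharpoonup\kappa$ weakly in $L^1((0,T)\times\partial\Omega)$ along a further subsequence.

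With these convergences in hand I would pass to the limit in the identity defining $u_m\in S^\theta_{u_0}(f_m)$, tested against $\phi(t)v(x)$ with $\phi\in C_0^\infty(0,T)$ and $v\in V\cap L_N^\infty(\partial\Omega)$: the linear and nonlinear terms converge by the weak convergences just listed, the boundary term by $\kappa_m\rightharpoonup\kappa$, and the right-hand side by $f_m\rightharpoonup f$. This recovers the weak identity for the limit $u$ with multiplier $\kappa$. It remains to verify the inclusion $\kappa\in\widehat\theta(u_N)$ a.e.; here I would reproduce verbatim the final Egoroff-theorem argument of Theorem~\ref{existence}, using the uniform a.e.\ convergence $u_{mN}\to u_N$ off a small set $\omega$ together with the monotonicity of $\mu\mapsto\underline\theta_\mu,\overline\theta_\mu$, then sending $\eps\to0$, $m\to\infty$, $\mu\to0^+$, and finally $\alpha\to0$. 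The initial condition $u(0)=u_0$ survives because $\mathcal W\hookrightarrow C(0,T;H)$ and the trace map $u\mapsto u(0)$ is weakly continuous on $\mathcal W$.

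The main obstacle is the boundary nonlinearity: because the Rauch condition does not make $J$ locally Lipschitz or finite on $\mathcal V$, the multipliers $\kappa_m$ live only in $L^1$, so the crucial step is securing their \emph{uniform} $L^1$ weak precompactness independently of $f_m$. This hinges on the fact that the constants $\delta_0,\delta_1$ in the preceding Lemma depend only on $\theta$ and not on $u$ or $f$, which is precisely what lets the Dunford–Pettis/biting-type argument of Proposition~\ref{pro2} run uniformly in $m$; without this $f$-uniformity the limit multiplier $\kappa$ could fail to exist in $L^1$ or to satisfy the subdifferential inclusion.
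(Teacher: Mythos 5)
Your proposal is correct and follows essentially the same route as the paper: uniform a priori bounds from the energy identity combined with the preceding Lemma, extraction of weak/strong limits (including the compact trace), a Dunford--Pettis argument for weak $L^1$ precompactness of the multipliers, passage to the limit in the defining identity, and an Egoroff-type argument for the inclusion $\kappa\in\widehat\theta(u_N)$. The only adjustment worth noting is that here $\kappa_m$ is a selection of $\widehat\theta(u_{mN})$ rather than a mollification $\theta_{\eps_m}(u_{mN})$, so there is no $\eps\to 0$ step to ``reproduce verbatim''; instead one proves the set inclusion $\widehat\theta(u_{mN})\subset\widehat\theta(u_N)$ for large $m$ via the same triangle-inequality and $\mu$-monotonicity computation, which is exactly what the paper does.
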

\begin{proof}
Let $f_m,\,f\in  \mv^*$ with $f_m\rightarrow f$ weakly in $\mv^*$. Let $\{u_m\}_m$ be a sequence such that $u_m\in S^\theta_{u_0}(f_m)$ for each $m\in\mathbb N$, then by Theorem \ref{existence}, there exists $\kappa_m\in L^1([0,T] \times\partial\Omega)$ such that $\kappa_m(t,x)\in\partial j(u_{mN}(t,x))$  for a.e $(t,x)\in[0,T]\times\partial\Omega$ and 
\begin{equation}\label{eqstab}
\langle u'_m(t)+\mathscr A u_m(t)+ \mathscr B[u_m(t)],v\rangle+\int_{\partial{\Omega}} \kappa_m\, v_N\,d\sigma=\la f_m(t),v\ra
\end{equation}for a.e $t\in[0,T]$ and all $v\in V\cap L^\infty_N(\partial\Omega)$.
With the same calculations as in the last section one obtains
\[
\frac{1}{2}\frac{d}{dt}|u_m(t)|^2+M\|u_m(t)\|_V^2\leq \frac{2}{M}\|f_m(t)\|_{V^*}^2+\frac{M}{2}\|u(t)\|_V^2-2\int_{\partial{\Omega}} \kappa_m\, u_{mN}\,d\sigma
\]
It follows
\[
\frac{d}{dt}|u_m(t)|^2+M\|u_m(t)\|_V^2\leq \frac{4}{M}\|f_m(t)\|_{V^*}^2 -2\int_{\partial{\Omega}} \kappa_m\, u_{mN}\,d\sigma
\]
Integrating over $(0,t)$ we get
\[
\abs{u_m(t)}^2+M\int_0^t\|u_m(s)\|_{V}^2\,ds\leq \frac{4}{M}\|f_m\|_{\mv^*}+\abs{u(0)}^2+2\delta_0\,\delta_1\,T\sigma(\partial\Omega)
\]
It follows that $\{u_m\}_m$ is bounded in $L^\infty(0,T;H)\cap L^2(0,T;V)$. Hence, by passing to a subsequence if necessary, there exists $u$ such that $\{u_m\}_m$ converges to $u$ weakly in $L^2(0,T;V)$ and weak$-^*$ in $L^\infty(0,T;V)$.
Using the compactness of the trace operator $\gamma$, we may assume that $\gamma\,u_m\rightarrow \gamma\, u$ in $L^2(0,T;H)$ and then $\gamma\,u_m(t,z)\rightarrow \gamma\, u(t,z)$ for a.e. $(t,z)\in [0,T]\times\partial\Omega$. Consequently, $u_{mN}(t,z)\rightarrow u_N(t,z)$ for a.e. $(t,z)\in[0,T]\times\partial\Omega$. Let us show that there exists $m_0\in\mathbb N$ such that 
  
\[
\partial j(u_{mN})\subset \partial j(u_N),\quad \text{for all }m\geq m_0
\]
As $u_{mN}\rightarrow u_N$ for a.e. $(t,z)\in[0,T]\times\partial\Omega$, we can find, by Egoroff's theorem, that for any $\alpha > 0$ we can determine $\omega\subset [0,T]\times \partial{\Omega}$   with  $(dt\times\sigma)(\omega) <\alpha$  such that
\begin{equation}
u_{mN}\rightarrow u_N,\quad\text{ uniformly on } [0,T]\times\partial{\Omega} \setminus \omega
\end{equation} 
with $u_N\in  L^{\infty}( ([0,T]\times\partial{\Omega}) \setminus \omega )$. Thus for any $\mu> 0$ there exists $m_0$ such that for all $m > m_0$ we have
\begin{equation*}
|u_{mN}- u_N|<\frac{\mu}{2},\qquad \text{a.e on } [0,T]\times \partial{\Omega}\setminus\omega
\end{equation*}
By using triangle inequality, we have that 

\begin{equation*}
\begin{array}{ll}
\overline{\theta}_{\frac{\mu}{2}}(u_{mN})&= \displaystyle\esssup_{|u_{mN}-  \xi|\leq \frac{\mu}{2}} \theta(\xi)\\
&\leq \displaystyle\esssup_{|u_N-  \xi|\leq  \mu} \theta(\xi)\\
&= \overline{\theta}_{\mu}(u_N)
\end{array}
\end{equation*}
Analogously we prove the inequality
\begin{equation*}
 \underline{\theta}_{\mu}(u_N)\leq \underline{\theta}_{\frac{\mu}{2}}(u_{mN})
\end{equation*}
Taking the limit $\mu\rightarrow 0^+$, we obtain for each $m\geq m_0$ 
\[
\kappa_m\in\widehat{\theta}(u_{mN})\subset \widehat{\theta}(u_N), \qquad \text{ a.e on } [0,T]\times \partial{\Omega}\setminus\omega
\]where $(dt\times\sigma)(\omega) < \alpha $. For  $\alpha $ as small as possible, we obtain
\[
\kappa_m\in\widehat{\theta}(u_{mN})\subset \widehat{\theta}(u_N), \qquad \text{ a.e on }[0,T]\times \partial{\Omega}
\]from which we can conclude follows $\{\kappa_m\}_m$ is bounded. By Dunford-Pettis theorem \cite[p.239]{Ek76}, we will show that the sequence $\{\kappa_m\}_m$ is weakly precompact in $L^1([0,T]\times\partial\Omega)$. For this end we show that for each $\mu>0$, there exists $\delta>0$ such that for $\omega\subset [0,T]\times\partial\Omega$, $(dt\times\sigma)(\omega)<\delta$
\[
\int_{\omega}|\kappa_m|\,d\sigma\,dt<\mu
\]
For some $a>0$ and remarking that in $\{|u_{mN}|>a\}$, $1< \frac{|u_{mN}|}{a}$, one have
\begin{align*}
\int_{\omega}|\kappa_m|\,d\sigma\,dt=&\int_{\omega}|\kappa_m|1_{\{|u_{mN}|>a\}}\,d\sigma\,dt+\int_{\omega}|\kappa_m|1_{\{|u_{mN}|\leq a\}}\, d\sigma\,dt\\
\leq&\frac{1}{a}\int_{[0,T]\times\partial\Omega}|\kappa_m\,u_{mN}|\,d\sigma\,dt+\int_{\omega}|\kappa_m|1_{\{|u_{mN}|\leq a\}}\,d\sigma\,dt
\end{align*}
From one hand one has
\begin{align*}
\int_{[0,T]\times\partial\Omega}|\kappa_m\,u_{mN}|\, \sigma\,dt=&\int_{\{|u_{mN}|>\delta_0\}}|\kappa_m\,u_{mN}|\,d\sigma\,dt+\int_{\{|u_{mN}|\leq\delta_0\}}|\kappa_m\,u_{mN}|\,d\sigma\,dt\\
=&\int_{\{|u_{mN}|>\delta_0\}}|\kappa_m\,u_{mN}|\,d\sigma\,dt-\int_{\{|u_{mN}|\leq\delta_0\}}|\kappa_m\,u_{mN}|\,d\,\sigma\\&\qquad\qquad+2\int_{\{|u_{mN}|\leq\delta_0\}}|\kappa_m\,u_{mN}|\,d\sigma\,dt\\
\leq&\int_{\{|u_{mN}|>\delta_0\}}|\kappa_m\,u_{mN}|\,d\sigma\,dt+\int_{\{|u_{mN}|\leq\delta_0\}}\kappa_m\,u_{mN}\,d\sigma\,dt\\
&\qquad\qquad+2\int_{\{|u_{mN}|\leq\delta_0\}}|\kappa_m\,u_{mN}|\,d\sigma\,dt\\
=& \int_{[0,T]\times\partial\Omega}\kappa_m\,u_{mN}\,d\sigma\,dt+2\int_{\{|u_{mN}|\leq\delta_0\}}|\kappa_m\,u_{mN}|\,d\sigma\,dt
\end{align*}
From the equation \eqref{eqstab} with $v=u_m(t)$
\begin{align*}
\int_{[0,T]\times\partial\Omega}|\kappa_m\,u_{mN}|\, \sigma\,dt \leq &\int_0^T\la f_m(t),u_m(t)\ra\,dt-\int_0^T\la u_m'(t),u_m(t)\ra\,dt\\
&-\int_0^T\la\mathscr A u_m(t),u_m(t)\ra\,dt+2\int_{\{|u_{mN}|\leq\delta_0\}}|\kappa_m\,u_{mN}|\,d\sigma\,dt 
\\
\leq& \int_0^T \|f_m(t)\|_{V^*}\|u_m(t)\|_V\,dt-\frac{1}{2}|u(T)|^2+\frac{1}{2}|u(0)|^2\\
&-M\int_0^T\|u_m(t)\|_V^2\,dt+2\delta_0\int_{\{|u_{mN}|\leq\delta_0\}}|\kappa_m|\,d\sigma\,dt\\
\leq&\frac{1}{2}\int_0^T\|f_m(t)\|^2_{V^*}\,dt+\frac{1}{2}\int_0^T\|u_m\|^2_{V}\,dt+\frac{1}{2}|u_0|^2
\\
&\qquad+2\delta_0\,(dt\times\sigma)(\{|u_{mN}|\leq\delta_0\})\,\displaystyle\sup_{|u_{mN}|\leq\delta_0}|\kappa_m|
\\
\leq& c+2\delta_0\delta_1 T\sigma(\partial\Omega)
\end{align*}
On the other hand, for each $\varepsilon>0$ there is $a_\varepsilon>0$, such that for $|\nu|<a_\varepsilon$ one have
\begin{align*}
|\kappa_m|\leq&\displaystyle\esssup_{|s-u_{mN}|<\nu}|\theta(s)|+\varepsilon\\
\leq&\displaystyle\esssup_{|s-u_{mN}|<a_\varepsilon}|\theta(s)|+\varepsilon
\end{align*}
This implies 
\[
\displaystyle\sup_{|u_{mN}|\leq a}|\kappa_m|\leq \displaystyle\esssup_{|s|<a+a_\varepsilon}|\theta(s)|+\varepsilon
\]
We choose for example $\varepsilon=1$, which leads to
\[
\displaystyle\sup_{|u_{mN}|\leq a}|\kappa_m|\leq \displaystyle\esssup_{|s|<a+a_1}|\theta(s)|+1
\]
Now we choose $a$ such that 
\[
\frac{1}{a}\int_{[0,T]\times\partial\Omega}|\kappa_m\,u_{mN}|\,d\sigma\,dt\leq \frac{1}{a}(c+2\delta_0\delta_1 T\sigma(\partial\Omega))<\frac{\mu}{2}
\]
and $\delta$ such that 
\[
\displaystyle\esssup_{|s|<a+a_1}|\theta(s)|+1<\frac{\mu}{2\delta}
\]
With this choice of $\delta$ one have
\begin{align*}
\int_{\omega}|\kappa_m|1_{\{|u_{mN}|\leq a\}}\,d\sigma\,dt\leq&\, \displaystyle\sup_{|u_{mN}|\leq a}|\kappa_m|\,(dt\times\sigma)(\omega)\\
\leq & \,(\displaystyle\esssup_{|s|<a+a_1}|\theta(s)|+1)\,(dt\times\sigma)(\omega)\\
<& \frac{\mu}{2\delta}\delta=\frac{\mu}{2}
\end{align*}
It follows
\begin{align*}
\int_{\omega}|\kappa_m|d\,\sigma\leq&\frac{1}{a}\int_{\partial\Omega}|\kappa_m\,u_{mN}|d\,\sigma+\int_{\omega}|\kappa_m|1_{\{|u_{mN}|\leq a\}}d\,\sigma\\
<&\frac{\mu}{2}+\frac{\mu}{2}=\mu
\end{align*}
Consequently, we can extract from $\{\kappa_m\}_m$ a subsequence( denoted with the same symbol) that converges in $L^1((0,T)\times\partial\Omega)$ to some $\kappa\in L^1((0,T)\times\partial\Omega)$. By passing to the limit in \eqref{eqstab}, we get  
\begin{equation*}
\langle u'(t)+\mathscr A u(t)+ \mathscr B[u(t)],v\rangle+\int_{\partial{\Omega}} \kappa\, v_N\,d\sigma=\la f(t),v\ra
\end{equation*}with 
\[
\kappa\in \overline{\mathrm{conv}}\,\, \widehat{\theta}(u_N)=\widehat{\theta}(u_N),\quad\,\text{a.e on } [0,T]\times \partial{\Omega}
\]

\end{proof}
\begin{remark}
We will need Theorem \ref{fn} just for external forces in $L^2(0,T;H)$. As in this situation the duality between $V$ and $V^*$ coincides with the one on $H$, this will bring no more difficulties. 
\end{remark}
\begin{remark}
One can prove in the same way as in \cite[Theorem 5.1]{MBA} that the solutions of (EHVI) are stable under perturbation of $\theta$.

\end{remark}

In the remaining of this section, we will use the notation $S(f)$ instead of $S_{u_0}^\theta(f)$. We follow Migorski \cite{law2013note} and we let $\mathcal U=L^2(0,T;H)$ be the space of controls and $\mathcal U_{ad}$  a nonempty subset of $\mathcal U$ consisting of admissible controls. Let $\mathscr F:\mathcal U\times\mathcal V\rightarrow \mathbb R$ be the objective functional we want to minimize. The control problem reads as follows: 
\begin{equation}\label{control}
 \left\{ 
\begin{array}{ll}
\text{ Find a control }\hat f\in\mathcal U_{ad} \text{ and a state } \hat u\in S(\hat f) \text{ such that}:\\ \mathscr F(\hat f,\hat u)=\inf\left\{\mathscr F(f,u):\,f\in\mathcal U_{ad},\, u\in S( f)  \right\}

\end{array}
\right.
\end{equation}
A pair $(\hat f,\hat u)$ which solves \eqref{control} is called an optimal solution. The existence of such optimal solutions can be proved by using Theorem \ref{fn}. To do so, we need the following additional hypotheses:

\begin{itemize}
\item[~]$H(\mathcal U_{ad})$ $\quad \mathcal U_{ad}$ is a bounded and weakly closed subset of $\mathcal U$.
\end{itemize}

\begin{itemize}
\item[~] $H(\mathscr F)$ $\quad \mathscr F$ is lower semicontinuous with respect to $\mathcal U\times\mathcal V$ endowed with the weak topology.
\end{itemize}

\begin{theorem}\label{optsol}
Assume that $H(\theta)$, $H(\mathcal U_{ad})$ and $H(\mathscr F)$ are fulfilled. Then the problem \eqref{control} has an optimal solution.
\end{theorem}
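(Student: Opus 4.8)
The plan is to use the direct method of the calculus of variations, exactly as in the classical existence theory for optimal control of evolution inclusions. First I would observe that the feasible set of the problem is nonempty: by Theorem \ref{existence} the solution set $S(f)=S^\theta_{u_0}(f)$ is nonempty for every $f\in\mathcal U\subset\mv^*$, and $\mathcal U_{ad}$ is nonempty by hypothesis, so the infimum in \eqref{control} is well defined. Let $m^\star=\inf\{\mathscr F(f,u):\,f\in\mathcal U_{ad},\,u\in S(f)\}$ and take a minimizing sequence $(f_n,u_n)$ with $f_n\in\mathcal U_{ad}$, $u_n\in S(f_n)$, and $\mathscr F(f_n,u_n)\to m^\star$.

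Next I would extract the appropriate weak limits. Since $\mathcal U=L^2(0,T;H)$ is a Hilbert space and $\mathcal U_{ad}$ is bounded by $H(\mathcal U_{ad})$, the sequence $\{f_n\}$ admits a subsequence (not relabeled) with $f_n\too\hat f$ weakly in $\mathcal U$; because $\mathcal U_{ad}$ is weakly closed, $\hat f\in\mathcal U_{ad}$. The key coupling step, and the point that requires a little care, is to feed this convergence into Theorem \ref{fn}, whose hypothesis demands $f_n\to f$ weakly in $\mv^*$. This is resolved by the continuous embedding $H\simeq H^*\subset V^*$, which yields a continuous linear injection $L^2(0,T;H)\hookrightarrow L^2(0,T;V^*)=\mv^*$; continuous linear maps are weak-to-weak continuous, so $f_n\too\hat f$ weakly in $\mathcal U$ forces $f_n\too\hat f$ weakly in $\mv^*$, and the duality pairings agree as noted in the remark following Theorem \ref{fn}. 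Having verified the hypotheses of Theorem \ref{fn}, I apply it to the sequence $u_n\in S^\theta_{u_0}(f_n)$ and pass to a further subsequence so that $u_n\too\hat u$ weakly in $\mathcal V$ with $\hat u\in S^\theta_{u_0}(\hat f)=S(\hat f)$.

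Finally I would conclude by lower semicontinuity. The pair $(\hat f,\hat u)$ is admissible, i.e. $\hat f\in\mathcal U_{ad}$ and $\hat u\in S(\hat f)$, and along the chosen subsequence we have $(f_n,u_n)\to(\hat f,\hat u)$ in the weak topology of $\mathcal U\times\mathcal V$. By $H(\mathscr F)$,
\[
\mathscr F(\hat f,\hat u)\leq\liminf_{n\to\infty}\mathscr F(f_n,u_n)=m^\star .
\]
Since $(\hat f,\hat u)$ is feasible, the reverse inequality $\mathscr F(\hat f,\hat u)\geq m^\star$ holds by definition of the infimum, whence $\mathscr F(\hat f,\hat u)=m^\star$ and $(\hat f,\hat u)$ is an optimal solution of \eqref{control}.

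I expect the only genuinely delicate point to be the transfer of weak convergence from the control space $\mathcal U=L^2(0,T;H)$ to $\mv^*$ so that Theorem \ref{fn} is applicable; every other step is the standard compactness/lower-semicontinuity machinery of the direct method. The stability result of Theorem \ref{fn} does the heavy lifting of showing that the weak limit of states remains a solution associated with the limit control, which is precisely the closedness property of the solution map $S(\cdot)$ needed to close the argument.
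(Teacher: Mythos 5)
Your proof is correct and takes essentially the same route as the paper: extract a weakly convergent minimizing sequence of controls using $H(\mathcal U_{ad})$, invoke Theorem \ref{fn} to pass the states to a weak limit that remains a solution for the limit control, and conclude by the weak lower semicontinuity in $H(\mathscr F)$. You are in fact slightly more careful than the paper's own proof on the one delicate point --- transferring the weak convergence of $f_n$ from $\mathcal U=L^2(0,T;H)$ to $L^2(0,T;V^*)$ so that Theorem \ref{fn} is applicable --- which the paper only addresses in the remark following that theorem.
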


\begin{proof}
Let $(f_m,u_m)$ be a minimizing sequence for the problem \eqref{control}, i.e $f_m\in\mathcal U_{ad}$ and $u_m\in S(f_m)$ such that 
\[
\displaystyle\lim_{m\to\infty}\mathscr F(f_m,u_m)=\inf\left\{\mathscr F(f,u):\,f\in\mathcal U_{ad},\, u\in S( f) \right\}=:\vartheta
\]
It follows that the sequence $f_m$ belongs to a bounded subset of the reflexive Banach space $\mathcal V$. We may then assume that $f_m\rightarrow \hat f$ weakly in $\mathcal V$ (by passing to a subsequence if necessary). By $H(\mathcal U_{ad})$, we have $\hat f\in\mathcal U_{ad}$. From Theorem \ref{fn}, we obtain, by again passing to a subsequence if necessary, that $u_m\rightarrow \hat u$ weakly in $\mathcal V$ with $\hat u\in S(\hat f)$. By $H(\mathscr F)$, we have $\vartheta\leq\mathscr F(\hat f,\hat u)\leq \displaystyle\liminf_{m\to\infty}\mathscr F(f_m,u_m)=\vartheta$. Which completes the proof.
\end{proof}

Next we apply Theorem \ref{optsol} in a concrete example. Let $X$ be another Hilbert space, $\mathcal X:=L^2(0,T;X)$, $\mathcal X_{ad }\subset\mathcal X$ the set of admissible controls and $C\in \mathscr L(\mathcal X,\mathcal U)$ a bounded linear operator from $\mathcal X$ to $\mathcal U$. Let $f\in L^2(0,T; V^*)$, we aim to study the following optimal control problem
\begin{equation}\label{control2}
 \left\{ 
\begin{array}{ll}
\text{ Find a control }w\in\mathcal X_{ad} \text{ and a state } \hat u\in S(C\hat w+f) \text{ such that}:\\ \mathscr J(\hat w,\hat u)=\inf\left\{\mathscr J(w,u):\,w\in\mathcal X_{ad},\, u\in S( Cw+f)  \right\}

\end{array}
\right.
\end{equation}where the objective functional is given by
\[
\mathscr J(w,u)=\int_0^T\int_\Omega(u(t,x)-z(t,x))^2\,dx\,dt+\int_0^T h(w(t))\,dt
\]for some function $h:X\rightarrow\mathbb R$ and $z\in L^2(0,T;H)$. Such optimal control problems arise in a wide range of applications, particularly in fluids flow control. More specifically, one try to act on the flow in such a way that certain flow profile is stabilized or enforced by devices like actuators. Also sensors are used to provide necessary information for the actuation measured here by the control input operator $C$. Our goal is to minimize the discrepancy between the ideal velocity profile $z$ and the actual flow $u$. Moreover, the cost related to the actuators and the sensors  should be also minimized. A more sophisticated example of this framework is the blood flow in an artificial heart. The goal will be to avoid, among other things, the stagnation causing some serious hymodynamic problems.

Let us first announce the following corollaries of Theorem \ref{fn}:

\begin{corollary}\label{cor}
Under $H(\theta)$ assume that $\varphi_m$, $\varphi$, $f\in \mv^*$ such that $\varphi_m\rightarrow \varphi$ weakly in $\mv^*$. Then for every $u_m\in S(\varphi_m+f)$, we can find a subsequence (still denoted with the same symbol) such that $u_m \rightarrow u$ in $\mathcal V$ and $u\in S(\varphi+f)$.
\end{corollary}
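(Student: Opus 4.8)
The plan is to deduce the statement from Theorem \ref{fn} and then to upgrade the weak convergence it provides to strong convergence in $\mv$ by exploiting the coercivity of $\mathscr A$ together with an energy identity. Since $\varphi_m\rightharpoonup\varphi$ weakly in $\mv^*$ we have $\varphi_m+f\rightharpoonup\varphi+f$ weakly in $\mv^*$, so applying Theorem \ref{fn} with $f_m:=\varphi_m+f$ yields a subsequence (not relabelled) and a limit $u\in S(\varphi+f)$ with $u_m\rightharpoonup u$ weakly in $\mv$ and weak-$*$ in $L^\infty(0,T;H)$, together with multipliers $\kappa_m\to\kappa$ strongly in $L^1((0,T)\times\partial\Omega)$ and $\kappa\in\widehat\theta(u_N)$. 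As in the proof of Theorem \ref{existence}, an Aubin--Lions argument gives, along a further subsequence, $u_m\to u$ strongly in $\mh$, hence $u_{mN}\to u_N$ in $L^2((0,T)\times\partial\Omega)$ and, passing again to a subsequence, $u_{mN}\to u_N$ and $\kappa_m\to\kappa$ a.e. on $(0,T)\times\partial\Omega$.

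The heart of the matter is to prove norm convergence $\norm{u_m}_{\mv}\to\norm{u}_{\mv}$; since $\mv$ is a Hilbert space and $u_m\rightharpoonup u$, this is equivalent to the desired strong convergence. Writing $I_m:=\int_0^T\la\mathscr A u_m(t),u_m(t)\ra\,{\rm d}t=\nu\norm{u_m}_{\mv}^2$ and $I:=\nu\norm{u}_{\mv}^2$, the identity $\la\mathscr A(u_m-u),u_m-u\ra=\nu\norm{u_m-u}_V^2$ and the symmetry of the bilinear form $(\!(\cdot,\cdot)\!)_V$ give, after integration in time,
\begin{equation*}
\nu\norm{u_m-u}_{\mv}^2=I_m-2\int_0^T\la\mathscr A u(t),u_m(t)\ra\,{\rm d}t+I .
\end{equation*}
Because $\mathscr A u\in\mv^*$ is fixed and $u_m\rightharpoonup u$ in $\mv$, the integral $\int_0^T\la\mathscr A u(t),u_m(t)\ra\,{\rm d}t$ converges to $I$, so that $\nu\norm{u_m-u}_{\mv}^2=I_m-I+o(1)$, and everything reduces to proving $I_m\to I$.

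To that end I would test each equation with its own solution, exactly as in Lemma \ref{prop3} and in Theorem \ref{fn}; using $\la\mathscr B[w],w\ra=0$ and $\la w',w\ra=\tfrac12\tfrac{\rm d}{{\rm d}t}\abs{w}^2$ one obtains
\begin{equation*}
I_m=\int_0^T\la\varphi_m+f,u_m\ra\,{\rm d}t+\tfrac12\abs{u_0}^2-\tfrac12\abs{u_m(T)}^2-\int_{(0,T)\times\partial\Omega}\kappa_m u_{mN}\,{\rm d}\sigma\,{\rm d}t ,
\end{equation*}
and the analogous identity for $I$. It then suffices to check $\limsup_m I_m\le I$, since $I\le\liminf_m I_m$ is automatic from the weak lower semicontinuity of the norm. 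I pass to the limsup term by term. For the forcing term I use the regime of interest (external forces in $\mh$, as singled out in the Remark after Theorem \ref{fn}), where $\varphi_m\rightharpoonup\varphi$ weakly in $\mh$ while $u_m\to u$ strongly in $\mh$, so the pairing converges to $\int_0^T\la\varphi+f,u\ra\,{\rm d}t$. For the endpoint term, $u_m(T)\rightharpoonup u(T)$ weakly in $H$ (weak convergence in $\mw$, as established in Theorem \ref{existence}) gives $\limsup(-\tfrac12\abs{u_m(T)}^2)\le-\tfrac12\abs{u(T)}^2$. For the boundary term, the Rauch condition furnishes the fixed integrable lower bound $\kappa_m u_{mN}\ge-\delta_0\delta_1$ supplied by the Lemma preceding Theorem \ref{fn}, so Fatou's lemma together with $\kappa_m u_{mN}\to\kappa u_N$ a.e. yields $\liminf\int\kappa_m u_{mN}\ge\int\kappa u_N$, i.e. $\limsup(-\int\kappa_m u_{mN})\le-\int\kappa u_N$. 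Summing the three estimates gives $\limsup_m I_m\le I$, hence $I_m\to I$ and $u_m\to u$ strongly in $\mv$, with $u\in S(\varphi+f)$ from the first step.

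The main obstacle is precisely this passage to norm convergence, and within it two points require care. The forcing pairing $\int_0^T\la\varphi_m,u_m\ra\,{\rm d}t$ is a product of two sequences that converge only weakly, which in general need not converge; this is resolved by the $\mh$-compactness of $(u_m)_m$ combined with the forces lying in $\mh$, which is exactly the setting in which the corollary is applied. The second delicate point is the nonsmooth boundary term: since $\kappa_m$ is controlled only in $L^1$, a direct product estimate against $u_{mN}\in L^2$ is unavailable, and one must instead exploit the one-sided Rauch bound through Fatou's lemma rather than attempt $L^1$-convergence of the product itself.
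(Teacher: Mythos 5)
Your opening step---setting $f_m=\varphi_m+f$ and invoking Theorem \ref{fn}---is precisely the paper's entire proof, and it already yields everything the corollary is intended to assert: the convergence ``$u_m\rightarrow u$ in $\mathcal V$'' in the statement means \emph{weak} convergence, as Theorem \ref{fn} supplies and as Corollary \ref{corc} (which is deduced from this one and explicitly says ``converges weakly in $L^2(0,T;V)$'') confirms; the word ``weakly'' has simply been dropped in the statement. The remainder of your argument, which tries to upgrade to strong convergence in $\mv$, is therefore unnecessary---and, more importantly, it contains genuine gaps. The central one concerns the multipliers: Dunford--Pettis gives only \emph{weak} sequential compactness of $\{\kappa_m\}$ in $L^1((0,T)\times\partial\Omega)$, so your claim that $\kappa_m\to\kappa$ strongly in $L^1$, hence a.e.\ along a subsequence, is unjustified. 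Weakly convergent $L^1$ sequences (oscillating ones of the type $\sin(mt)$, say) need not admit any a.e.\ convergent subsequence, and with that your Fatou step $\liminf_m\int\kappa_m u_{mN}\geq\int\kappa u_N$ collapses: the pointwise lower bound $\kappa_m u_{mN}\geq-\delta_0\delta_1$ is indeed correct, but Fatou requires exactly the a.e.\ convergence of the products that is not available.

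Two further steps would also need repair. Your energy identity for $I=\nu\norm{u}^2_{\mv}$ tests the limit equation with $v=u(t)$, but the definition of a solution licenses only test functions $v\in V\cap L_N^\infty(\partial\Omega)$; under the Rauch condition $u_N$ need not be bounded and $\kappa$ is merely $L^1$, so neither the finiteness of $\int\kappa u_N$ nor the exact identity is guaranteed (the paper makes a similar move for the approximations, but only to extract one-sided a priori bounds, whereas your argument needs the identity as an equality for the limit). And your treatment of the forcing term requires $\varphi_m\rightharpoonup\varphi$ weakly in $\mh$ together with strong $\mh$-convergence of $u_m$, an assumption absent from the statement, which posits only $\varphi_m,\varphi,f\in\mv^*$; with weak $\mv^*$ convergence alone the pairing $\int_0^T\la\varphi_m,u_m\ra\,dt$ of two weakly convergent sequences need not converge, as you yourself observe. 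In short: your first paragraph is the paper's proof and settles the corollary in its intended sense; the strong-convergence upgrade should be discarded, as it cannot be repaired without substantially stronger hypotheses on both the forces and the multipliers.
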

\begin{proof}
It suffices to take $f_m=\varphi_m+f$ in Theorem \ref{fn}.
\end{proof}

\begin{corollary}\label{corc}
Under $H(\theta)$, assume that $f\in\mathcal V^*$ and $w_m$, $w\in \mathcal X$ are such that $w_m$ converges weakly to $w$ in $\mathcal X$. Then for every sequence $\{u_m\}_m$ such that $u_m\in S(Cw_m+f)$,  we can find a subsequence that converges weakly in $L^2(0,T;V)$ to $u\in S(Cw+f)$.
\end{corollary}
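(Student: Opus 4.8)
The plan is to reduce Corollary \ref{corc} to Corollary \ref{cor}, which in turn follows from Theorem \ref{fn}. The strategy is to exhibit the perturbations $Cw_m + f$ as a weakly convergent sequence in $\mathcal V^*$ and then invoke the stability result already established.

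First I would observe that $\{w_m\}_m$ converges weakly to $w$ in $\mathcal X = L^2(0,T;X)$ by hypothesis. Since $C \in \mathscr L(\mathcal X, \mathcal U)$ is a bounded linear operator, it is in particular weakly continuous (every bounded linear operator between Banach spaces maps weakly convergent sequences to weakly convergent sequences). Hence $Cw_m \rightharpoonup Cw$ weakly in $\mathcal U = L^2(0,T;H)$. Recalling from the remark following Theorem \ref{fn} that, for external forces in $L^2(0,T;H)$, the duality pairing between $V$ and $V^*$ coincides with the inner product on $H$, the embedding $\mathcal U = L^2(0,T;H) \hookrightarrow \mv^* = L^2(0,T;V^*)$ is continuous, and weak convergence in $\mathcal U$ therefore transfers to weak convergence in $\mv^*$. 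Thus $Cw_m \rightharpoonup Cw$ weakly in $\mv^*$ as well.

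Next I would set $\varphi_m := Cw_m$ and $\varphi := Cw$, so that $\varphi_m \rightharpoonup \varphi$ weakly in $\mv^*$ by the previous step, and $f \in \mathcal V^*$ is fixed. By hypothesis $u_m \in S(Cw_m + f) = S(\varphi_m + f)$ for each $m$. Applying Corollary \ref{cor} directly yields a subsequence (not relabeled) such that $u_m \rightharpoonup u$ in $\mathcal V$ with $u \in S(\varphi + f) = S(Cw + f)$, which is exactly the conclusion sought.

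The proof is therefore essentially a routine transfer argument, and the only point requiring care is the verification that weak convergence of $w_m$ survives the successive mappings: through $C$ (using boundedness/weak continuity of $C$) and through the continuous embedding $\mathcal U \hookrightarrow \mv^*$ (using the identification of dualities noted in the remark). Once weak convergence in $\mv^*$ is secured, there is no further obstacle—Corollary \ref{cor} does all the remaining work, absorbing the compactness and subdifferential-closure arguments inherited from Theorem \ref{fn}. The main conceptual subtlety, such as it is, lies in making explicit that the control enters only through its image $Cw_m$ as an additive external force, so that the stability result for forces applies verbatim.
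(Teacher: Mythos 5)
Your proposal is correct and follows exactly the route the paper takes: the paper's proof of Corollary \ref{corc} is the one-line reduction ``take $\varphi_m = Cw_m$ in Corollary \ref{cor}.'' Your additional verification that weak convergence of $w_m$ is preserved first under the bounded operator $C$ and then under the continuous embedding $L^2(0,T;H)\hookrightarrow \mathcal V^*$ simply makes explicit a step the paper leaves implicit.
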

\begin{proof}
It suffices to take $\varphi_m=Cw_m$ in Corollary \ref{cor}.
\end{proof}

Assume the following:
\begin{itemize}

\item[~](i)\quad $f \in L^2(0,T,V^*)$ and $z\in L^2(0,T;H)$.

\item[~](ii) \quad $\mathcal X_{ad}$ is a weakly compact subset of $\mathcal X$.

\item[~](iii)\quad the function $h:X\rightarrow \mathbb R$ is convex, lower semi-continuous and satisfies the coercivity condition
\[
|h(w)|\geq \alpha|w|^2_X+\beta
\]for some $\alpha>0$ and $\beta\in \mathbb R$. $|.|_X$ stands for the norm of the Hilbert space $X$.
\end{itemize}

\begin{theorem}
If hypotheses $(\text i)-(\text{iii})$ and $H(\theta)$ hold, then the problem \eqref{control2} has an optimal solution.
\end{theorem}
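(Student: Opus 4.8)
The plan is to reduce the theorem to an application of the general existence result in Theorem~\ref{optsol}. The key observation is that the concrete problem \eqref{control2} is a special case of the abstract control problem \eqref{control} once we identify the right control space and check the three structural hypotheses $H(\theta)$, $H(\mathcal U_{ad})$ and $H(\mathscr F)$. Since $H(\theta)$ is assumed, the only real work is to encode the control-to-force map $w\mapsto Cw+f$ and the objective $\mathscr J$ into the framework of Theorem~\ref{optsol} and to verify the admissibility and semicontinuity conditions.

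First I would set up the identification. The admissible set is $\mathcal X_{ad}\subset\mathcal X=L^2(0,T;X)$, which by hypothesis (ii) is weakly compact, hence in particular bounded and weakly closed; this plays the role of $H(\mathcal U_{ad})$ at the level of $\mathcal X$. The bounded linear operator $C\in\mathscr L(\mathcal X,\mathcal U)$ is weak-to-weak continuous, so $w_m\rightharpoonup w$ in $\mathcal X$ implies $Cw_m\rightharpoonup Cw$ in $\mathcal U$, and by hypothesis (i) we have $f\in\mathcal V^*$; thus $Cw_m+f\rightharpoonup Cw+f$ weakly in $\mathcal V^*$. This is exactly the convergence hypothesis needed to invoke Corollary~\ref{corc}, which guarantees that any sequence of states $u_m\in S(Cw_m+f)$ has a subsequence converging weakly in $\mathcal V$ to some $u\in S(Cw+f)$. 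In effect, Corollary~\ref{corc} supplies the stability of the solution map along a minimizing sequence.

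Next I would address the objective functional. I must show $\mathscr J$ is weakly lower semicontinuous on $\mathcal X\times\mathcal V$, which is the analogue of $H(\mathscr F)$. The first term $\int_0^T\int_\Omega (u-z)^2\,dx\,dt=\|u-z\|_{\mathcal H}^2$ is a continuous convex functional of $u$ in $\mathcal H$, hence weakly lower semicontinuous; since $u_m\rightharpoonup u$ weakly in $\mathcal V$ and $\mathcal V\hookrightarrow\mathcal H$ compactly, we in fact get $u_m\to u$ strongly in $\mathcal H$, so this term even passes to the limit continuously. The second term $\int_0^T h(w(t))\,dt$ is weakly lower semicontinuous on $\mathcal X$ because $h$ is convex and lower semicontinuous by hypothesis (iii), so the integral functional is convex and l.s.c. on $L^2(0,T;X)$ and therefore weakly l.s.c. by the standard convexity argument (Mazur's lemma). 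The coercivity bound in (iii) is not strictly needed for existence here once $\mathcal X_{ad}$ is weakly compact, but it confirms that $\mathscr J$ is bounded below and that minimizing sequences stay in $\mathcal X_{ad}$.

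With these verifications, I would run the minimizing-sequence argument exactly as in Theorem~\ref{optsol}: take $(w_m,u_m)$ with $w_m\in\mathcal X_{ad}$, $u_m\in S(Cw_m+f)$ and $\mathscr J(w_m,u_m)\to\vartheta:=\inf\mathscr J$; extract by weak compactness of $\mathcal X_{ad}$ a subsequence $w_m\rightharpoonup\hat w\in\mathcal X_{ad}$; apply Corollary~\ref{corc} to extract a further subsequence $u_m\rightharpoonup\hat u\in S(C\hat w+f)$; and conclude $\vartheta\le\mathscr J(\hat w,\hat u)\le\liminf_m\mathscr J(w_m,u_m)=\vartheta$ by the weak lower semicontinuity established above. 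Hence $(\hat w,\hat u)$ is an optimal solution. \textbf{The main obstacle} I anticipate is purely one of careful bookkeeping rather than deep difficulty: one must ensure the two successive subsequence extractions (first for $w_m$, then for $u_m$) are compatible, and that Corollary~\ref{corc} genuinely applies with $\varphi_m=Cw_m$, which requires only the weak-to-weak continuity of the bounded operator $C$. The functional-analytic content is light because Theorem~\ref{fn} and its corollaries already absorb the hard convergence analysis of the hemivariational inequality.
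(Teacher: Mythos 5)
Your proposal is correct and follows essentially the same route as the paper: the paper's own proof is a one-line reduction to Theorem~\ref{optsol} via the identifications $f_m=Cw_m+f$, $\mathcal U_{ad}=C\mathcal X_{ad}$ and $\mathscr F(f_m,u_m)=\mathscr J(w_m,u_m)$, which is exactly the reduction you carry out. You simply supply the verifications the paper leaves implicit (weak compactness of the image $C\mathcal X_{ad}$, weak lower semicontinuity of $\mathscr J$ via compact embedding and convexity, and the role of Corollary~\ref{corc}), and your choice to rerun the minimizing-sequence argument directly at the level of $w_m$ neatly sidesteps the minor well-definedness issue of defining $\mathscr F$ on $C\mathcal X_{ad}$ when $C$ is not injective.
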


\begin{proof}
Let $(w_n,u_n)$ be a minimizing sequence to the problem \eqref{control2}, i.e $w_n\in\mathcal X_{ad}$ and $u_m\in S(Cw_n+f)$ such that 
\[
\displaystyle\lim_{m\to\infty}\mathscr J(w_m,u_m)=\inf\left\{\mathscr J(w,u):\,w\in\mathcal X_{ad},\, u\in S(Cw+ f) \right\}
\]
Denote $f_m=Cw_m+f$, $\mathscr F(f_m,u_m)=\mathscr J(w_m,u_m)$ and $\mathcal U_{ad}=C\mathcal X_{ad}$. It suffices now to apply Theorem \ref{optsol} for $\mathcal U_{ad}$ and $\mathscr F$.
\end{proof}
\section{Directional growth condition}

As mentioned in the introduction, Rauch condition is a particular case of the directional growth condition due to Z. Naniwiecz \cite{nani94}. It is of common knowledge that the foregoing mentioned conditions are sufficient to establish the existence of solution without any additional growth hypothesis on $j$. The notion of being solution needs only to be modified. Here we will reconsider the same problem of evolutionary hemivariational Navier-Stokes equations but with the more general condition of directional growth.

Let $j:\partial\Omega\times\mathbb R\rightarrow \mathbb R$ be a measurable function with respect to the first argument and locally Lipschitz with respect to the second argument. We assume the following:
\begin{enumerate}
\item[~]$H(j)$\quad there exists $\beta:\partial\Omega\times \mathbb R^+\rightarrow \mathbb R$ integrable with respect to the first argument and nondecreasing with respect to the second argument such that 
\[
|j(x,\xi)-j(x,\eta)|\leq\beta(x,r)|\xi-\eta|,\quad\forall \xi,\,\eta\in B(0,r),\, r\geq 0
\]
\end{enumerate}
\begin{enumerate}
\item[~]$H(j^0)$\quad there exists a function $\alpha:\partial\Omega\times \mathbb R^+\rightarrow\mathbb R$ square-integrable with respect to the first argument and nondecreasing with respect to the second argument such that the following estimate holds
\[
j^0(x;\xi,\eta-\xi)\leq\alpha(x,r)(1+|\eta|)
\]for almost every $x\in\partial\Omega$ and for any $\xi,\,\eta\in\mathbb R$ with $-r\leq\eta\leq r$,\, $r\geq 0$.
\end{enumerate}
Remark that if $j$ don't depends on $x\in\partial\Omega$ then it satisfies $H(j)$ automatically. The hypothesis $H(j^0)$ is called the directional growth condition.

For $x\in\partial\Omega$ and $\xi\in\mathbb R$, define $j_\varepsilon(x,\xi)=\mathfrak h{_\varepsilon}\star j(x,.)(\xi)$ and denote by $j_\varepsilon'$ the derivative of $j_\varepsilon$ with respect to the second argument. As usual let $\{\varphi_1,\varphi_2,\dots\}$ be a basis in $V\cap L^\infty_N(\partial\Omega)$ and $V_m=\text{span}\{\varphi_1,\varphi_1,\dots,\varphi_m\}$. We then consider the following regularized problem of Galerkin type associated with $(EHVI)$, noted $(\mathscr P^m_\varepsilon)$: Find $u_m\in \mathcal W_m$ such that $u_m(0)=u_{0m}$ and
\[
\la u_m'(t)+\mathscr A u_m(t)+\mathscr B[u_m(t)],v\ra+\int_{\partial\Omega} j_{\varepsilon_m}'(u_{mN}).v_N\,d\sigma=\la f(t),v\ra
\]for all $t\in(0,T)$ and all $v\in V_m$.

Note that due to the integrability of $\beta$ with respect to the first argument and $H(j)$ , the integral above is finite for each $u_m,\,v\in V_m$. In fact, we have
\[
\left|j_{\varepsilon_m}'(x;u_{mN(x)}).v_N(x)\right|\leq \beta(x,\|u_N\|_{L^\infty(\partial\Omega)}+1)\|v_N\|_{L^\infty(\partial\Omega)}
\]
Since $\beta(.,\|u_N\|_{L^\infty(\partial\Omega)}+1)\in L^1(\partial\Omega)$ the integrability of $j_{\varepsilon_m}'(u_{mN}).v_N$ over $\partial\Omega$ follows immediately for any $v\in V_m$. We have the following lemma(cf. \cite[Lemma 3.1]{nani94})

\begin{lemma}\label{estj}
Suppose that $H(j^0)$ holds. Then the estimate
\begin{equation}
j_{\varepsilon}'(x;\xi)(\eta-\xi)\leq \bar\alpha(x,r)(1+|\xi|),\qquad 0<\varepsilon<1
\end{equation}is valid for any $\xi,\,\eta\in\mathbb R$ with $-r\leq \eta\leq r$, $r\geq 0$ and almost all $x\in\partial\Omega$ where $\bar\alpha(x,r)=2\alpha(x,r+1)$.
\end{lemma}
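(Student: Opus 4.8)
The plan is to control the mollified derivative $j_\varepsilon'$ by exploiting the relationship between the convolution $j_\varepsilon = \mathfrak{h}_\varepsilon \star j$ and the Clarke generalized directional derivative $j^0$. The key observation is that differentiating the mollified function brings down a derivative that can be rewritten as an integral against the mollifier, and by the properties of the Clarke subdifferential this quantity is bounded pointwise by a supremum of $j^0$ evaluated at nearby points. First I would write, for fixed $x$ and for $0 < \varepsilon < 1$,
\[
j_\varepsilon'(x;s) = \int_{-1}^{1} \mathfrak{h}(\tau)\, j'(x; s - \varepsilon\tau)\, d\tau
\]
wherever the classical derivative $j'(x;\cdot)$ exists (which is almost everywhere in $s$, since $j(x,\cdot)$ is locally Lipschitz), and then observe that at points of differentiability the ordinary derivative agrees with a selection from the Clarke subdifferential, so $j'(x; s-\varepsilon\tau) \in \partial j(x; s-\varepsilon\tau)$.

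Next I would use the defining inequality of the Clarke subdifferential: for any $\zeta^* \in \partial j(x;\sigma)$ and any direction $h$ one has $\zeta^* h \le j^0(x;\sigma, h)$. Applying this with $\sigma = s - \varepsilon\tau$ and direction $h = \eta - \xi$ at the relevant arguments, together with $H(j^0)$, yields a bound of the form $j'(x; s-\varepsilon\tau)(\eta-\xi) \le \alpha(x,r')(1+|\eta|)$, where $r'$ must be chosen large enough to accommodate the perturbed argument. The crucial point is the bookkeeping on the argument of $\alpha$: since $|\varepsilon\tau| \le \varepsilon < 1$, evaluating $j^0$ at the shifted point $\xi - \varepsilon\tau$ stays within the ball of radius $r+1$, so by the monotonicity of $\alpha$ in its second argument we may replace $\alpha(x,r')$ by $\alpha(x, r+1)$. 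Integrating against the mollifier $\mathfrak{h}$, whose total mass is $1$ and which is nonnegative, preserves the inequality, and the factor of $2$ in $\bar\alpha(x,r) = 2\alpha(x,r+1)$ absorbs the replacement of $|\eta|$ bounds arising from the shift together with any constant picked up when passing from $(1+|\eta|)$ at the perturbed argument to the stated $(1+|\xi|)$.

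The main obstacle I anticipate is the last adjustment, namely passing cleanly from a bound of the form $\bar\alpha(x,r)(1+|\eta|)$ to the stated form $\bar\alpha(x,r)(1+|\xi|)$ as in the lemma. Under the hypothesis $-r \le \eta \le r$ one has $1+|\eta| \le 1+r$, so I would be careful to track whether the lemma really wants $(1+|\xi|)$ on the right (a bound depending on the base point $\xi$) or whether, given the constraint on $\eta$, both formulations are interchangeable up to enlarging $\alpha$; the factor $2$ in the definition of $\bar\alpha$ strongly suggests that the intended route is to split the directional derivative estimate and reassemble it so that the perturbation $\varepsilon\tau$ is controlled by passing from $r$ to $r+1$, doubling the constant in the process. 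Since the structure of the argument mirrors Naniewicz's Lemma~3.1 cited in the statement, I would follow that template and the delicate part is purely the constant-chasing in the argument of $\alpha$ rather than any conceptual difficulty.
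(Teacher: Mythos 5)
Your outline follows the right template---the paper itself gives no proof of this lemma and simply defers to \cite[Lemma 3.1]{nani94}, whose argument is exactly the one you sketch: write $j_\varepsilon'(x,\xi)=\int_{-1}^{1}\mathfrak h(\tau)\,j'(x,\xi-\varepsilon\tau)\,d\tau$, use that the a.e.\ derivative of the locally Lipschitz function $j(x,\cdot)$ is a selection of the Clarke subdifferential, bound the integrand by $j^0$, and integrate against the nonnegative unit-mass mollifier. The step you leave unresolved, however, is precisely the one that produces the stated constant, and your tentative resolution of it does not work. To invoke the directional growth condition at the shifted base point you must rewrite the direction as
\[
\eta-\xi=(\eta-\varepsilon\tau)-(\xi-\varepsilon\tau),
\]
so that $j'(x,\xi-\varepsilon\tau)(\eta-\xi)\le j^0\bigl(x,\xi-\varepsilon\tau;(\eta-\varepsilon\tau)-(\xi-\varepsilon\tau)\bigr)$ with the new target $\eta-\varepsilon\tau$ satisfying $|\eta-\varepsilon\tau|\le r+1$; monotonicity of $\alpha$ in its second argument then yields the argument $r+1$. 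It is the \emph{target} $\eta$, not the base point, that must stay in the ball of radius $r$, and your phrasing conflates the two.

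The factor $2$ does \emph{not} come from converting a bound in $|\eta|$ into a bound in $|\xi|$: since $|\eta|\le r$ while $\xi$ is arbitrary (possibly $0$), no factor independent of $r$ can turn $(1+|\eta|)$ into $(1+|\xi|)$, so your claim that the $2$ ``absorbs'' that passage fails. The hypothesis $H(j^0)$ as printed, with $(1+|\eta|)$ on the right-hand side, is a misprint for Naniewicz's condition $j^0(x,\xi;\eta-\xi)\le\alpha(x,r)(1+|\xi|)$, in which the growth is measured at the \emph{base point}; this is also the form the paper actually uses later, when it deduces $j_{\varepsilon_m}'(u_{mN})u_{mN}\ge-\bar\alpha(x,0)(1+|u_{mN}|)$. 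With that reading the shifted base point gives
\[
1+|\xi-\varepsilon\tau|\le 2+|\xi|\le 2\,(1+|\xi|),
\]
and this inequality is the sole source of the $2$ in $\bar\alpha(x,r)=2\alpha(x,r+1)$ (note $\alpha\ge 0$, as one sees by taking $\eta=\xi$ in $H(j^0)$). If instead you insist on the literal $(1+|\eta|)$ hypothesis, the best available bound is $\alpha(x,r+1)(2+r)$, which is not dominated by $2\alpha(x,r+1)(1+|\xi|)$ for small $|\xi|$, so the lemma as stated would not follow. Correcting the reading of $H(j^0)$ and inserting the two displayed estimates turns your sketch into a complete proof.
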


The problem $(\mathscr P_\varepsilon^m)$ has at least one solution in $V_m$. In fact, substitution of $u_m(t)=\sum_{k=1}^m c_{km}(t)\varphi_k$ gives an initial value problem for a system of first order ordinary differential equations for $c_{km}(.)$, $k=1,\,2,\,\dots,\,m$. Its solvability on some interval $[0,t_m)$ follows from Carath\'eodory theorem. This solution can be extended on the closed interval $[0,T]$ by using the a priori estimates below.

Using the coercivity of $\mathscr A$, properties of $\mathscr B[.]$ and Young inequality, we have
\begin{align*}
\frac{1}{2}\frac{d}{dt}|u_m(t)|_H+M\|u_m(t)\|^2_V+\int_{\partial\Omega} j_{\varepsilon_m}'(u_{mN}(t)).u_{mN}(t)\,d\sigma
\leq \frac{M}{2}\|u_m(t)\|^2_V+\frac{2}{M}\|f(t)\|^2_{V^*}
\end{align*}
From Lemma \ref{estj}, we have
\[
j_{\varepsilon_m}'(u_{mN}).u_{mN}\geq -\bar\alpha(x,0)(1+|u_{mN}|)
\]
Then
\begin{align*}
\frac{1}{2}\frac{d}{dt}|u_m(t)|_H+&M\|u_m(t)\|^2_V
\leq \frac{2}{M}\|f(t)\|^2_{V^*}+\int_{\partial\Omega} \bar\alpha(x,0)(1+|u_{mN}|)\,d\sigma\\
\leq &\frac{2}{M}\|f(t)\|^2_{V^*}+\sigma(\partial\Omega)^{\frac{1}{2}}\|\bar\alpha(0)\|_{L^2(\partial\Omega)}+\|\bar\alpha(0)\|_{L^2(\partial\Omega)}\|u_{mN}(t)\|_{L^2(\partial\Omega)}\\
\leq &\frac{2}{M}\|f(t)\|^2_{V^*}+\sigma(\partial\Omega)^{\frac{1}{2}}\|\bar\alpha(0)\|_{L^2(\partial\Omega)}+\|\bar\alpha(0)\|_{L^2(\partial\Omega)}\|\gamma\|\|u_{m}(t)\|_V
\end{align*}
Integrating over $(0,t)$ we get
\begin{align*}
\frac{1}{2}|u_m(t)|_H&-\frac{1}{2}|u_{0m}|_H^2+\frac{M}{2}\int_0^t\|u_m(s)\|_V^2\,ds\\
\leq &\frac{2}{M}\|f\|^2_{\mathcal V^*}+\sigma(\partial\Omega)^{\frac{1}{2}}\|\bar\alpha(0)\|_{L^2(\partial\Omega)} t+\|\bar\alpha(0)\|_{L^2(\partial\Omega)}\|\gamma\|\int_0^t\|u_{mN}(s)\|_V\,ds
\end{align*}
It follows

\begin{align*}
\frac{1}{2}|u_m(t)|_H+\frac{M}{2}\|u_m\|_{\mathcal V}^2\leq & \frac{1}{2}|u_{0m}|_H^2+\frac{2}{M}\|f\|^2_{\mathcal V^*}\\
+&\sigma(\partial\Omega)^{\frac{1}{2}}T\|\bar\alpha(0)\|_{L^2(\partial\Omega)} +\sqrt T\|\bar\alpha(0)\|_{L^2(\partial\Omega)}\|\gamma\|\|u_{m}\|_{\mathcal V}
\end{align*}
It follows that $\{u_m\}_m$ is a bounded subset of $\mathcal V$ and $L^\infty(0,T;H)$, so passing to subsequence, if necessary, we have that 
\[
u_m\rightarrow u\quad\text{weakly in }\mathcal V\quad\text{and weakly}-^*\text{ in }L^\infty(0,T;H)
\]
Following the same procedure as in section 4, see also\cite{M07HE}, we have $u_m'\rightarrow u'$ weakly in $\mathcal W$ and $\mathscr B[u_m]\rightarrow \mathscr B[u]$ weakly in $\mathcal V^*$. Using the same proof as in \cite[Lemma 3.3]{nani94}, one can prove that the sequence $\{j_{\varepsilon_m}'(u_{mN}\}$ is weakly precompact in $L^1((0,T)\times\partial\Omega)$. This means that
\[
j_{\varepsilon_m}'(u_{mN})\longrightarrow\kappa\quad \text{weakly in } L^1((0,T)\times\partial\Omega)
\]
Moreover, the following equality holds
\[
\la u_m'(t)+\mathscr A u_m(t)+\mathscr B[u_m(t)],v\ra+\int_{\partial\Omega} j_{\varepsilon_m}'(u_{mN}).v_N\,d\sigma=\la f(t),v\ra
\]for almost every $t\in(0,T)$ and $v\in V\cap L_N^\infty(\partial\Omega)$. We pass to the limit as usual to obtain
\[
\la u'(t)+\mathscr A u(t)+\mathscr B[u_n(t)],v\ra+\int_{\partial\Omega} \kappa(t).v_N\,d\sigma=\la f(t),v\ra
\]for almost every $t\in[0,T]$ and $v\in V\cap L_N^\infty(\partial\Omega)$.

It still to prove that $\kappa(t,x)\in\partial j(x,u_N(t,x))$ for almost every $t\in (0,T)$ and $x\in\partial\Omega$. Since $\gamma u_n\longrightarrow \gamma u$ in $L^2(0,T;L^2(\partial\Omega;\mathbb R^d))$, we obtain that $u_{mN}\longrightarrow u_N$ in $L^2(0,T;L^2(\partial\Omega))$  and consequently for almost every $t\in (0,T)$
\[
u_{mN}(t,x)\longrightarrow u_N(t,x)\quad \text{a.e. }x\in\partial\Omega
\]
By Egoroff's theorem, with respect to $x\in\partial\Omega$, we have for any $\rho>0$ a subset $\omega$ of $\Gamma$ with
\[
u_{mN}\longrightarrow u_N\quad\text{uniformly on }\partial\Omega\setminus\omega
\]with $u_N\in L^\infty(\partial\Omega\setminus\omega)$. Let $v\in L_N^\infty(\partial\Omega\setminus\omega)$ be arbitrary given. Due to the Fatou's lemma, for any positive $\varepsilon>0$ there exists $\delta_\varepsilon>0$ and $N_\varepsilon$ such that 
\[
\int_{\partial\Omega\setminus\omega}\frac{j(x,u_{mN}(x)-\tau+\nu v_N(x))-j(x,u_m(x)-\tau)}{\nu}\,d\sigma\leq \int_{\partial\Omega\setminus\omega} j^0(x,u_N(x);v_N(x))\,d\sigma+\varepsilon
\]provided $m>N_\varepsilon$, $|\tau|<\delta_\varepsilon$ and $0<\nu<\delta_\varepsilon$. This inequality multiplied by $\mathfrak h_{\varepsilon_m}$ and integrated over $\mathbb R$ yields
\begin{align*}
\int_{\partial\Omega\setminus\omega}&\frac{j_{\varepsilon_m}(x,u_{mN}(x)+\nu v_N(x))-j_{\varepsilon_m}(x,u_{mN}(x))}{\nu}\,d\sigma(x)\\=&\int_{\mathbb R} \mathfrak h_{\varepsilon_m}(\tau)\int_{\partial\Omega\setminus\omega}\frac{j(x,u_{mN}(x)-\tau+\nu v_N(x))-j(x,u_n(x)-\tau)}{\nu}\,d\sigma\,d\tau\\
\leq & \int_{\partial\Omega\setminus\omega} j^0(x,u_N(x);v_N(x))\,d\sigma+\varepsilon
\end{align*}
But as $\nu\rightarrow 0$ we get
\[
\int_{\partial\Omega\setminus\omega}j_{\varepsilon_m}'(x,u_N(x)).v_N(x)\,d\sigma\leq \int_{\partial\Omega\setminus\omega} j^0(x,u_N(x);v_N(x))\,d\sigma+\varepsilon
\]which is valid for $m>N_\varepsilon$. Now letting $m\rightarrow\infty$ we are led to
\[
\int_{\partial\Omega\setminus\omega}\kappa.v_N\,d\sigma \leq \int_{\partial\Omega\setminus\omega} j^0(x,u_N(x);v_N(x))\,d\sigma+\varepsilon
\]
Since $\varepsilon>0$ was chosen arbitrary

\[
\int_{\partial\Omega\setminus\omega}\kappa.v_N\,d\sigma \leq \int_{\partial\Omega\setminus\omega} j^0(x,u_N(x);v_N(x))\,d\sigma\quad\text{for all }v\in L_N^\infty(\partial\Omega\setminus\omega)
\]
But the last inequality easily implies that 
\[
\kappa(x)\in\partial j(x,u_N(x))\quad \text{for a.e }x\in\partial\Omega\setminus\omega
\]where $\sigma(\omega)<\rho$. Now since $\rho$ was chosen arbitrary 
\[
\kappa(x)\in\partial j(x,u_N(x))\quad \text{for a.e }x\in\partial\Omega
\]which completes the proof.

\begin{remark}
The directional growth condition is meant to study problems involving vector valued functions, i.e functions on $\mathbb R^N$. Our situation is simpler as $N=1$. In this case, the direction growth condition can be simplified to the following condition
\begin{equation}\label{G2}
j^0(x,\xi,-\xi)\leq k(x)|\xi|,\quad\forall \xi\in\mathbb R\text{ for a.e }x\in\partial\Omega
\end{equation}for some nonnegative function $k\in L^2(\partial\Omega)$ (cf. \cite[Remark 4.1]{nani94}). Moreover, the Rauch condition and sign condition fulfil also the estimate \eqref{G2}(cf. \cite[Remark 4.7]{nani94}).

\end{remark}

\begin{remark}
It is an easy task to check that the results in section 5, regarding optimal solution, are also valid if one replace the assumption $H(\theta)$ by the more general assumption $H(j^0)$.

\end{remark}

\section*{Acknowledgements}
We would like to express our gratitude to the Editor for taking time to handle the manuscript and to anonymous referees whose constructive comments are very helpful for improving the quality of our paper. We thank Prof. S. Migorski for pointing out that the Rauch and the growth conditions are completely independent.

\section*{Conflict of  interest}
The authors declare that they have no conflict of interest.

\addcontentsline{toc}{section}{Bibliographie}


\begin{thebibliography}{99}
%%%%%%%%%%%%%%%%%%
\bibitem{ahmed92} N.U. Ahmed : Optimal control of hydrodynamic flow with possible application to artificial 
heart, Dynamic Systems and Appl. { \bf 1}, 103-120, (1992).
%%%%%%%%%%%%%%%%%%%%%%%%%%
%\bibitem{aubin84} Aubin, J.P., Cellina, A. : Diﬀerential Inclusions. Set-Valued Maps and Viability Theory, Springer-Verlag, Berlin, NewYork, Tokyo,1984.
%%%%%%%%%%%%%%%%%%%%%%%%%%%%%%%


\bibitem{Ba} V. Barbu :  Optimal control of Navier-Stokes equations with periodic inputs, Nonlinear Anal. TMA { \bf 31} (1), 15-31, (1998).

%%%%%%%%%%%%ù
\bibitem{Benaadi} S. Ben  Aadi : Contribution \`a l'\'etude des probl\`emes d'\'equilibre et applications aux \'equations de Navier-Stokes et aux in\'equations h\'emi-variationnelles, thesis (2019).

%%%%%%%%%%%%
\bibitem{Byk60} E.B. Bykhovski,  N.V. Smirnov : On the orthogonal decomposition of the space of vector-valued square summable functions and the operators of vector analysis, Trudy Mat. Inst. Steklov (in Russian). {\bf 59}, 6-36, (1960).
%%%%%%%%%%%%%%%%%%%%%%%%%%%%%%%
\bibitem{chang80}  K.C. Chang : The obstacle problem and partial differential equations with discontinuous non-linearities, Comm. Pure Appl. Math. {\bf 33}, 117-146, (1980).
%%%%%%%%%%%%%%%%%%%%%%%%%%%%%%%%%%
\bibitem{Ek76} I. Ekeland, R. Temam :  Convex Analysis and Variational Problems, North-Holland, Amsterdam, { \bf 1}, (1976).
%%%%%%%%%%%%%%%%%%%%%%%%%%%%%%%%


%%%%%%%%%%%%%%%%%%%%%%%%%%%%%%%%%%%%%%
\bibitem{chebo92}  A.Yu. Chebotarev : Subdifferential boundary value problems for stationary Navier-Stokes equations, Differentsial'nye Uravneniya. { \bf 28}(8), 1443-1450, (1992).
%%%%%%%%%%%%%%%%%%%%%%
\bibitem{chebo97} A.Yu. Chebotarev : Stationary variational inequalities in the model of inhomogeneous incompressible fluids,
Sibirsk. Math. Zh. (Siberian Math. J.) { \bf 38}, 1184-1193, (1997).
%%%%%%%%%%%%%%%%%%
\bibitem{chebo01} A.Yu. Chebotarev : Variational inequalities for Navier-Stokes type operators and one-sided problems for equations of viscous heat-conducting fluids, Mathematical Notes. { \bf 70}(1-2), 264-274, (2001).
%%%%%%%%%%%%%%%%%%%%%%%%%%%%%%
\bibitem{chebo03} A.Yu. Chebotarev : Modeling of steady flows in a channel by Navier-Stokes variational inequalities, J. Appl.
Mech. Tech. Phys. { \bf 44}, 852-857, (2003).
%%%%%%%%%%%%%%%%%%%%%%%%%%%%%
%%%%%%%%%%%%%%%%%%%%%%%%%%%%%
\bibitem{Clar83} F.H. Clarke : Optimization and nonsmooth analysis. Wiley-Interscience, New York, (1983).
%%%%%%%%%%%%%%%%%%%%%%%%%%%

\bibitem{Clar75} F.H. Clarke : Generalized gradient and applications, Transactions of American Mathematical Society. { \bf 205}, 247-262, (1975).
%%%%%%%%%%%%%%%%%%%%%%%%

\bibitem{FH20} C. Fang, W. Han : Stability analysis and optimal control of A stationary Stokes hemivariational inequality. Evolution Equations and Control Theory. {\bf 9}(4), (2020).


%%%%%%%%%%%%%%%%
\bibitem{fang16} C. Fang, W. Han, S. Migórski, M. Sofonea : A class of hemivariational inequalities for nonstationary Navier-Stokes equations,
Nonlinear Analysis Series B: Real World Applications. {\bf 31}, 257-276, (2016). 
%%%%%%%%%%%%%%%%%%%%%%%%%%%%%%%%%
\bibitem{Gir86} V. Girault, P.A. Raviart : Finite Element Methods for Navier- Stokes Equations: Theory and Algorithms.
Springer-Verlag, Berlin, (1986).
%%%%%%%%%%%%%%%%%%%%%%%%ù
 \bibitem{kono00}  D.S. Konovalova :  Subdifferential boundary value problems for evolution Navier-Stokes equations, Differ. Uravn.(Differential Equations). {\bf 36}, 792-798, (2000).
%%%%%%%%%%%%%%%%%%%%%%%%%%%
\bibitem{XZS} X. Li, Z. Liu, S. Mig\'orski : Approximate controllability for second order nonlinear evolution hemivariational inequalities, Electronic Journal of Qualitative Theory of Differential Equations. {\bf 100}, 1-16, (2015).

%%%%%%%%%%%%%%%%%%%%%
\bibitem{LL} Y. Li, L. Lu : Existence and controllability for stochastic evolution inclusions of Clarke's subdifferential type, Electronic Journal of Qualitative Theory of Differential Equations. 1-16, (2015).

%%%%%%%%%%%%%%%%%%%%%%%%%%%%%%%%%%
\bibitem{lions69}J.L. Lions : Quelques m\'ethodes de r\'esolution des probl\`emes aux limites non linéaires. Dunod. (1969).
%%%%%%%%%%%%%%%%%%
\bibitem{LT} Z.H. Liu, J.G. Tan : Nonlocal elliptic hemivariational inequalities, Electronic Journal of Qualitative Theory of Differential Equations. {\bf 2017}(66), 1-7, (2017).



%%%%%%%%%%%%%%%%

\bibitem{MBA} H. Mahdioui, S. Ben Aadi and K. Akhlil: Hemivariational inequality for Navier-Stokes : existence, dependence and optimal control, Bull. Iran. Math. Soc. (2020). https://doi.org/10.1007/s41980-020-00470-x


%%%%%%%%%%%%%ùù
\bibitem{mig04} S. Mig\'orski: Hemivariational inequalities modeling viscous incompressible fluids, J. Nonlinear
Convex Anal. {\bf 5}, 217-227 (2004).



%%%%%%%%%%%%%%%%%%%%%%%%%%%%%%%%%%%%%



\bibitem{migdud18} S. Mig\'orski, S. Dudek : Evolutionary oseen model for generalized Newtonian fluid with multivalued nonmonotone friction law, Journal of Mathematical Fluid Mechanics. {\bf 20}(3), 1317-1333, (2018).
%%%%%%%%%%%%%%%%%%%%%%%%%%%%%%%%
\bibitem{M07HE} S. Mig\'orski, A. Ochal : Navier-Stokes models modeled by evolution hemivariational inequalities, Discrete and Continuous Dynamical Systems (Suppl.) (2007) 731-740.

%%%%%%%%%%%%%%%%%%%%%%%%%%%%%%%%%
\bibitem{M05HE} S. Mig\'orski, A. Ochal : Hemivariational inequalities for stationary Navier-Stokes
equations, Journal of Mathematical Analysis and Applications. {\bf 306}, 197-217, (2005).
%%%%%%%%%%%%%%%%%%%%%%%%%%%%%%%%%
 \bibitem{law2013note} S. Mig\'{o}rski : A note on optimal control problem for a hemivariational inequality modelling fluid flow, Dynamical systems. 545-554, (2013) .

%%%%%%%%%%%%%%%%%%%%%%%ù

\bibitem{M19s}S. Mig\'orski, P. Szafraniec : Nonmonotone slip problem for miscible liquids, Journal of Mathematical Analysis and Applications. {\bf 471}(1-2), 342-357, (2019).

%%%%%%%%%%%%%%%%%%%%%%%%%%%%%%%%%%
%\bibitem{migorski2018evolutionary} S. Mig\'orski, S. Dudek : Evolutionary oseen model for generalized Newtonian fluid with multivalued nonmonotone friction law, Journal of Mathematical Fluid Mechanics, {\bf 20}(3), 1317-1333, (2018).
%%%%%%%%%%%%%%%%%%%%%%%
\bibitem{nani94} Z. Naniewicz : Hemivariational inequalities with functions fulfilling
directional growth condition, Applicable Analysis: An International Journal. {\bf 55}(3-4), 259-285, (1994).


%%%%%%%%%%ùùù

\bibitem{Pana91} P.D. Panagiotopoulos : Coercive and semicoercive hemivariational inequalities, nonlinear analysis, Theory, Methods and Applications. {\bf 16}, 209-231, (1991).
%%%%%%%%%%%%%%%%%%%%%%ù
\bibitem{Pana882} P.D. Panagiotopoulos : Variational-hemivariational inequalities in nonlinear elasticity, Aplikace Matematiky. {\bf  33}, 249-268, (1988).
%%%%%%%%%%%%%%%%%%%%%%%%%%%
\bibitem{Pana81} P.D. Panagiotopoulos : Nonconvex superpotentials in the sense of F.H. Clarke and applications, Mechanics Research Communications. {\bf 8}, 335-340, (1981).
%%%%%%%%%%%%%%%%%%%%%%%%%%%%%
\bibitem{Panasta88} P.D. Panagiotopoulos , G.E. Stavroulakis : A variational-hemivariational inequality approach to the laminated plate theory under subdifferential boundary conditions, Quarterly of Applied Mathematics. {\bf 46}, 409-430, (1988).
%%%%%%%%%%%%%%%%
\bibitem{Panakol90} P.D. Panagiotopoulos , E. Koltsakis : The nonmonotone skin effect in plane elasticity : problems obeying to subdifferential materials laws, ZAMM. {\bf 70}, 13-21, (1990).
%%%%%%%%%%%%%%%%%%%
\bibitem{Panaban84} P.D. Panagiotopoulos, C.C. Baniotopoulos : A hemivariational inequality and substationary approach to the interface problem : Theory and Prospects of Applications, Engineering Analysis. {\bf 1}, 20-31, (1984).

%%%%%%%%%%%%%%%%

\bibitem{Pana83} P.D. Panagiotopoulos : Nonconvex energy functions : hemivariational inequalities and substationary principles, Acta Mechanica. {\bf 42},160-183, (1983).
%%%%%%%%%%%%%%%%%%%%%%ù
\bibitem{Pana89} P.D. Panagiotopoulos : Inequality problems in mechanics and applications, Birhhauser Verlag, Basel, Switzerland, (1985) (Russian translation, MIH Publisher, Moscow, 1989).
%%%%%%%%%%%%%%%%%%%%%
\bibitem{Pana88} P.D. Panagiotopoulos : Hemivariational inequalities and their applications, In J. J. Moreau, and P.D. Panagiotopoulos and G. Strang (eds), topics in nonsmooth mechanics, Birkhauser Verlag, Basel, Switzerland, (1988).
%%%%%%%%%%%%%%%ù
\bibitem{Panagiotopoulos1} P.D. Panagiotopoulos : Inequality problems in mechanics and applications: convex and nonconvex energy functions. Springer Science and Business Media, (2012).
%%%%%%%%%%%%%%ù

\bibitem{PGM20} Z. Peng, P. Gamorski, S Mig\'orski : Boundary optimal control of a dynamic frictional contact problem, Z angew Math Mech. 2020; e201900144. https://doi.org/10.1002/zamm.201900144

%%%%%%%%%%%%%

\bibitem{rauch1977discontinuous} J. Rauch : Discontinuous semilinear differential equations and multiple valued maps, Proceedings of the American Mathematical Society. {\bf 64}(2),277-282, (1977).
%%%%%%%%%%%%%%%%%%%%%%%%%%%%%%%%%%
\bibitem{Rock80} R.T. Rockafellar : Generalized Derivatives and Subgradients of Nonconvex Functions, Canadian Journal of Mathematics. {\bf 32}, 257-280, (1980).
%%%%%%%%%%%%%%%%
\bibitem{Temam} R. Temam : Navier-Stokes equations, volume 2 of Studies in Mathematics and its Applications, Mathematical Surveys and Monographs, North-Holland Publishing Co., Amsterdam,(1979).
%%%%%%%%%
%%%%%%%%%%%%%%

\end{thebibliography}
\end{document}